\newtheorem{lemma}{Lemma}[section]
\newtheorem{theorem}[lemma]{Theorem}
\newtheorem{corollary}[lemma]{Corollary}
\newtheorem{proposition}[lemma]{Proposition}
\newtheorem{open}{Open problem}
\theoremstyle{definition}
\newtheorem{definition}[lemma]{Definition}
\newtheorem{remark}[lemma]{Remark}
\newtheorem*{ack}{Acknowledgements}
\numberwithin{equation}{section}
\title[Principal frequencies and inradius]{On principal frequencies and inradius\\ in convex sets}
\author[Brasco]{Lorenzo Brasco}
\address{Dipartimento di Matematica e Informatica
\newline\indent
Universit\`a degli Studi di Ferrara
\newline\indent
Via Machiavelli 35, 44121 Ferrara, Italy}
\email{lorenzo.brasco@unife.it}
\subjclass[2010]{35P15, 49J40, 35J70}
\keywords{Convex sets, $p-$Laplacian, nonlinear eigenvalue problems, inradius, Cheeger constant.}
\dedicatory{To Michelino Brasco, master craftsman and father, on the occasion of his 70th birthday}
\begin{document}

\begin{abstract}
We generalize to the case of the $p-$Laplacian an old result by Hersch and Protter. Namely, we show that it is possible to estimate from below the first eigenvalue of the Dirichlet $p-$Laplacian of a convex set in terms of its inradius. We also prove a lower bound in terms of isoperimetric ratios and we briefly discuss the more general case of Poincar\'e-Sobolev embedding constants. Eventually, we highlight an open problem.
\end{abstract}

\maketitle
\begin{center}
\begin{minipage}{8cm}
\small
\tableofcontents
\end{minipage}
\end{center}

\section{Introduction}

\subsection{Overview}
For every open set $\Omega\subset\mathbb{R}^N$, we consider its {\it principal frequency} or {\it first eigenvalue of the Laplacian with Dirichlet conditions}, defined by
\[
\lambda(\Omega)=\inf_{u\in C^\infty_0(\Omega)\setminus \{0\}} \frac{\displaystyle \int_\Omega |\nabla u|^2\,dx}{\displaystyle \int_\Omega |u|^2\,dx}.
\]
We recall that, whenever the completion $\mathcal{D}^{1,2}_0(\Omega)$ of $C^\infty_0(\Omega)$ with respect to the norm $\|\nabla u\|_{L^2(\Omega)}$ is compactly embedded into\footnote{For example, this happens if $\Omega$ is bounded or has finite $N-$dimensional Lebesgue measure.} $L^2(\Omega)$, the number $\lambda(\Omega)$ coincides with the smallest $\lambda\in\mathbb{R}$ such that the boundary value problem
\[
-\Delta u=\lambda\,u,\quad \mbox{ in }\Omega,\qquad u=0,\quad \mbox{ on }\partial\Omega,
\]
does admit a nontrivial solution $u\in \mathcal{D}^{1,2}_0(\Omega)$.
\par
For general sets, the explicit determination of $\lambda(\Omega)$ can be a challenging task. It is thus important to look for sharp estimates on $\lambda(\Omega)$ in terms on simpler quantities, typically of geometric flavour. The most celebrated instance of such an estimate is the so-called {\it Faber-Krahn inequality}. This asserts that $\lambda(\Omega)$ can be estimated from below by a negative power of the $N-$dimensional measure of $\Omega$. Precisely, we have
\begin{equation}
\label{FK}
\lambda(\Omega)\ge \left(|B|^\frac{2}{N}\,\lambda(B)\right)\,\frac{1}{|\Omega|^\frac{2}{N}},
\end{equation}
where $B$ is any $N-$dimensional ball. Equality \eqref{FK} is sharp in the sense that the dimensional constant $|B|^\frac{2}{N}\,\lambda(B)$ is attained whenever $\Omega$ is itself a ball (actually, this is the only possibility, up to sets of zero capacity). 
\par
In despite of its elegance, sharpness and simplicity, the lower bound dictated by \eqref{FK} loses its interest for open sets such that
\[
|\Omega|=+\infty\qquad \mbox{ and }\qquad \lambda(\Omega)>0.
\]
This happens for example for the infinite slab $\Omega=\mathbb{R}^{N-1}\times (0,1)$. 
\par
For such cases, it could be natural to ask whether a lower bound on $\lambda(\Omega)$ can be given in terms of the {\it inradius} $R_\Omega$, i.e. the radius of the largest open ball contained in $\Omega$. In other words, we can ask whether we can have an inequality like
\begin{equation}
\label{inra}
\frac{C}{R_\Omega^2}\le \lambda(\Omega).
\end{equation}
The power $-2$ on $R_\Omega$ is imposed by scale invariance, once it is observed that $\lambda(\Omega)$ has the physical dimensions ``{\it length to the power $-2$}''. However, an estimate like \eqref{inra} can not be true for general open sets, in dimension $N\ge 2$. Indeed, it is sufficient to consider the set
\[
\Omega=\mathbb{R}^N\setminus \mathbb{Z}^N.
\] 
It is easy to see that $R_\Omega<+\infty$, while $\lambda(\Omega)=\lambda(\mathbb{R}^N)=0$, since points have zero capacity in $\mathbb{R}^N$, if $N\ge 2$. 
\par
However, if we impose further geometric restrictions on the open set $\Omega$, then it is possible to prove \eqref{inra}.
An old result due to Hersch (see \cite{He}) shows that for an open {\it convex} set $\Omega\subset\mathbb{R}^2$, it holds 
\begin{equation}
\label{hersch}
\left(\frac{\pi}{2}\right)^2\,\frac{1}{R_\Omega^2}\le \lambda(\Omega).
\end{equation}
The inequality is sharp and it is strict among bounded convex sets. The proof by Hersch is based on a method that he called ``{\it \'evaluation par d\'efaut\,}''.
Later on, Protter generalized this result to higher dimensions by using the same technique, see \cite[page 68]{Pr}. 
\par
We also point out that the Hersch-Protter estimate has been recently generalized in \cite[Theorem 5.1]{BuGuMa} to the {\it anisotropic case}, i.e. to the case of 
\[
\lambda_H(\Omega)=\inf_{u\in C^\infty_0(\Omega)\setminus \{0\}} \frac{\displaystyle \int_\Omega H(\nabla u)^2\,dx}{\displaystyle \int_\Omega |u|^2\,dx},
\]
where $H:\mathbb{R}^N\to [0,+\infty)$ is any norm. In this case, the definition of inradius has to be suitably adapted, in order to take into account the anisotropy $H$.
\begin{remark}[More general sets I]
We have already observed that \eqref{inra} can not be true in general.
However, the planar case $N=2$ is peculiar and well-studied: in this case, if $\Omega$ is simply connected, then it is possible to prove \eqref{inra}, but the main open issue in this case is the determination of the sharp constant $C$. The first result in this direction is due to Hayman \cite{Ha}. We refer to \cite{BC} for a review of this kind of results.
\par
Actually, Osserman in \cite{Os} showed that \eqref{inra} still holds for planar sets with finite connectivity, the constant $C$ depending on the connectivity $k$ and degenerating as $k$ goes to $\infty$ (this is in perfect accordance with the above example of $\mathbb{R}^2\setminus \mathbb{Z}^2$). The result by Osserman has then been improved by Croke in \cite{Cr}.
\par 
For the higher dimensional case $N\ge 3$, some results for classes of open sets more general than convex ones have been given by Hayman \cite[Theorem 2]{Ha} and Taylor \cite[Theorem 3]{Ta}. 
\end{remark}

\subsection{The results of this paper}

We now fix an exponent $1<p<+\infty$, then for an open set $\Omega\subset\mathbb{R}^N$, we introduce the quantity
\[
\lambda_p(\Omega)=\inf_{u\in C^\infty_0(\Omega)\setminus \{0\}} \frac{\displaystyle \int_\Omega |\nabla u|^p\,dx}{\displaystyle \int_\Omega |u|^p\,dx}.
\]
As in the quadratic case $p=2$, whenever the completion $\mathcal{D}^{1,p}_0(\Omega)$ of $C^\infty_0(\Omega)$ with respect to the norm $\|\nabla u\|_{L^p(\Omega)}$ is compactly embedded into $L^p(\Omega)$, the number $\lambda_p(\Omega)$ coincides with the smallest $\lambda\in\mathbb{R}$ such that the boundary value problem
\[
-\Delta_p u=\lambda\,|u|^{p-2}\,u,\quad \mbox{ in }\Omega,\qquad u=0,\quad \mbox{ on }\partial\Omega,
\]
does admit a nontrivial solution $u\in \mathcal{D}^{1,p}_0(\Omega)$. Here $\Delta_p$ is the quasilinear operator
\[
\Delta_p u=\mathrm{div\,}(|\nabla u|^{p-2}\,\nabla u),
\]
known as {\it $p-$Laplacian}.
For this reason, $\lambda_p(\Omega)$ is called {\it first eigenvalue of the $p-$Laplacian with Dirichlet conditions on} $\Omega$. In this case as well, we have the sharp lower bound
\[
\lambda_p(\Omega)\ge \left(|B|^\frac{p}{N}\,\lambda_p(B)\right)\,\frac{1}{|\Omega|^\frac{p}{N}},
\]
which generalizes \eqref{FK} to $p\not=2$. 
The main goal of this paper is to generalize the Hersch-Protter estimate \eqref{hersch} to the case of $\lambda_p$. At this aim, we introduce the one-dimensional Poincar\'e constant
\[
\pi_p=\inf_{\varphi\in C^1([0,1])\setminus\{0\}} \left\{\frac{\|\varphi'\|_{L^p([0,1])}}{\|\varphi\|_{L^p([0,1])}}\, :\, \varphi(0)=\varphi(1)=0\right\}.
\]
We will prove the following
\begin{theorem}
\label{teo:herschp}
Let $\Omega\subset\mathbb{R}^N$ be an open convex set. Then we have
\begin{equation}
\label{herschp}
\lambda_p(\Omega)\ge \left(\frac{\pi_p}{2}\right)^p\,\frac{1}{R_\Omega^p}.
\end{equation}
The estimate is sharp, equality being attained for example:
\begin{itemize}
\item by an infinite slab, i.e. a set of the form
\[
\Big\{x\in\mathbb{R}^N\, :\, a< \langle x,\omega\rangle< b\Big\},
\]
for some $a<b$ and $\omega\in\mathbb{S}^{N-1}$;
\vskip.2cm
\item asymptotically by the family of  ``collapsing pyramids'' 
\[
C_\alpha=\mathrm{convex\, hull}\Big((-1,1)^{N-1}\cup \{(0,\dots,0,\alpha)\}\Big),
\]
in the sense that
\[
\lim_{\alpha\to 0^+}R_{C_\alpha}^p\,\lambda_p(C_\alpha)=\left(\frac{\pi_p}{2}\right)^p;
\]
\item more generally, asymptotically by the family of infinite slabs with section given by a $k-$dimensional collapsing pyramid, i.e.
\[
\mathbb{R}^{N-k}\times C_\alpha,\qquad\mbox{ for } N\ge 3 \mbox{ and } 2\le k\le N-1.
\]
\end{itemize}
\end{theorem}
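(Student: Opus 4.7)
The constant $(\pi_p/(2R_\Omega))^p$ on the right of \eqref{herschp} is precisely $\lambda_p\bigl((0,2R_\Omega)\bigr)$, so the inequality is really a one-dimensional comparison in disguise along the distance function. My plan is to construct an explicit positive supersolution of the $p$-Laplace eigenvalue equation on $\Omega$ by composing the one-dimensional first eigenfunction with the distance-to-the-boundary map, and to deduce \eqref{herschp} via a Picone-type test.

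\textbf{Main steps.} Let $\varphi:[0,2R_\Omega]\to[0,\infty)$ be the positive first Dirichlet eigenfunction on the interval $(0,2R_\Omega)$, symmetric about $R_\Omega$ and strictly increasing on $[0,R_\Omega]$, satisfying $-(|\varphi'|^{p-2}\varphi')'=(\pi_p/(2R_\Omega))^p\,\varphi^{p-1}$. Since $d_\Omega(x):=\mathrm{dist}(x,\partial\Omega)$ takes values in $[0,R_\Omega]$, the function
\[
v(x):=\varphi(d_\Omega(x))
\]
is positive in $\Omega$ and vanishes on $\partial\Omega$. A chain-rule computation, using $|\nabla d_\Omega|=1$ a.e.\ and the one-dimensional equation for $\varphi$, produces the identity
\[
-\Delta_p v \;=\; \left(\frac{\pi_p}{2R_\Omega}\right)^{\!p} v^{p-1}\;-\;(\varphi'(d_\Omega))^{p-1}\,\Delta d_\Omega.
\]
The decisive geometric input is that for convex $\Omega$ the function $d_\Omega$ is \emph{concave}, so $\Delta d_\Omega\le 0$ as a Radon measure, while $\varphi'(d_\Omega)\ge 0$; the correction term therefore has the right sign and gives the distributional supersolution inequality
\[
-\Delta_p v \;\ge\; \left(\frac{\pi_p}{2R_\Omega}\right)^{\!p} v^{p-1}\quad\text{in }\Omega.
\]
Feeding this into Picone's inequality for the $p$-Laplacian (Allegretto--Huang), or equivalently testing the weak form of the supersolution inequality against $u^{p}/(v+\varepsilon)^{p-1}$ with $u\ge 0$ a first eigenfunction on $\Omega$ and letting $\varepsilon\to 0^+$, yields \eqref{herschp}. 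Unbounded $\Omega$, or situations where $\lambda_p(\Omega)$ is not attained, are recovered by exhausting $\Omega$ with bounded convex subdomains $\Omega_n$ with $R_{\Omega_n}\to R_\Omega$ and $\lambda_p(\Omega_n)\to \lambda_p(\Omega)$.

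\textbf{Main obstacle, and sharpness.} The main technical hurdle is to make the chain-rule computation of $-\Delta_p v$ rigorous: $d_\Omega$ is only Lipschitz and $\varphi''$ can blow up where $\varphi'$ vanishes. I would circumvent this either by approximating $d_\Omega$ by smooth concave functions (via a standard regularisation preserving concavity) and passing to the limit, or more directly by noting that $|\varphi'|^{p-2}\varphi'$ is $C^1$ as a consequence of the ODE, so the a priori singular quantity $\varphi''$ never actually appears, and carrying out the integration by parts against a fixed test function $u\in C^\infty_c(\Omega)$, using that $-\Delta d_\Omega$ is a non-negative Radon measure. As for sharpness, the infinite slab is an immediate one-dimensional computation. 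For the collapsing pyramid $C_\alpha$, I would verify that $R_{C_\alpha}$ is of order $\alpha$ as $\alpha\to 0^+$ and construct test functions of product/profile type in the thin direction, showing that $\lambda_p(C_\alpha)\,R_{C_\alpha}^p\to(\pi_p/2)^p$; the case of infinite cylinders $\mathbb{R}^{N-k}\times C_\alpha$ reduces to this by the tensor product structure of the $p$-Laplace eigenvalue problem.
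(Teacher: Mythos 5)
Your proposal is correct, but it takes a genuinely different route from the paper's proof. The paper first invokes a geometric lemma (Lemma \ref{lm:madonne}, from \cite{BuGuMa}) to enclose $\Omega$ in a polyhedral convex set $T$ with the same inradius whose every face touches an inball; then it partitions $T$ into the pyramids $T_i=\mathrm{conv}(F_i\cup\{\xi\})$ over the faces and, via Lemma \ref{lm:gale}, bounds below a mixed Dirichlet--Neumann eigenvalue on each $T_i$ by a one-dimensional Poincar\'e inequality in the direction normal to $F_i$. You instead build the barrier $v=\varphi(d_\Omega)$ and conclude by Picone. Interestingly, your proof relies on exactly the same geometric input as the paper's Section~\ref{sec:3} (Hardy's inequality): concavity of $d_\Omega$, hence weak $p$-superharmonicity of $d_\Omega$ via $|\nabla d_\Omega|=1$ a.e.\ --- but by composing with the 1D eigenprofile $\varphi$ rather than using $d_\Omega$ itself, you capture the sharp constant that Hardy's inequality misses. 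The trade-off: the paper's route is more hands-on and avoids Picone altogether, whereas yours dispenses with the geometric enclosure lemma and treats unbounded convex sets directly (no exhaustion is actually needed, since $d_\Omega$ remains concave and $1$-Lipschitz with $|\nabla d_\Omega|=1$ a.e.\ on any open convex set).

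Two small technical remarks. First, you do not need $u$ to be a first eigenfunction in the Picone step (one may not exist on unbounded $\Omega$): any $u\in C^\infty_0(\Omega)$ with $u\ge 0$ works, and since the Rayleigh quotient is unchanged under $u\mapsto|u|$ this already bounds $\lambda_p(\Omega)$; moreover the $\varepsilon$-regularisation of $v$ is superfluous because $v$ is bounded away from $0$ on $\mathrm{supp}\,u$. Second, the cleanest way to make the supersolution inequality rigorous is not to regularise $d_\Omega$ but to set $h=|\varphi'|^{p-2}\varphi'$ (which is $C^1$ by the ODE, with $h'=-(\pi_p/(2R_\Omega))^p\varphi^{p-1}$ and $h\ge 0$ on $[0,R_\Omega]$) and test the weak $p$-superharmonicity of $d_\Omega$, i.e.\ inequality \eqref{pweak}, against $\zeta=h(d_\Omega)\,\psi$ for $\psi\in C^\infty_0(\Omega)$, $\psi\ge0$: expanding $\nabla\zeta$ and using $|\nabla d_\Omega|=1$ gives directly
\[
\int_\Omega h(d_\Omega)\,\nabla d_\Omega\cdot\nabla\psi\,dx\;\ge\;-\int_\Omega h'(d_\Omega)\,\psi\,dx\;=\;\left(\frac{\pi_p}{2R_\Omega}\right)^{p}\int_\Omega \varphi(d_\Omega)^{p-1}\,\psi\,dx,
\]
which is precisely the weak form of $-\Delta_p v\ge(\pi_p/(2R_\Omega))^p v^{p-1}$ with $v=\varphi(d_\Omega)$, since $|\nabla v|^{p-2}\nabla v=h(d_\Omega)\nabla d_\Omega$. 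No Alexandrov-type regularity of $d_\Omega$ and no appearance of $\varphi''$ are required. The sharpness discussion is in line with the paper (slab by direct computation, collapsing pyramids by a two-sided inclusion and scaling, cylinders $\mathbb{R}^{N-k}\times C_\alpha$ by the product structure of $\lambda_p$ as in Lemma \ref{lm:product}).
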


\begin{remark}[More general sets II]
For $p\not=2$, the case of more general sets has been investigated by Poliquin in \cite{Po}. In \cite[Theorem 1.4.1]{Po} it is proved that for $p>N$ and $\Omega\subset\mathbb{R}^N$ open bounded set, one has
\[
\lambda_p(\Omega)\ge \frac{C}{R_\Omega^p},
\] 
for a constant $C=C(N,p)>0$. Then in \cite[Theorem 1.4.2]{Po} the same estimate is proved, for $p>N-1$ and $\Omega$ having a connected boundary. In both cases, the constant $C$ is not explicit.
\end{remark}
As already observed by Makai in the case $p=N=2$ (see \cite{Ma}), the estimate of Theorem \ref{teo:herschp} in turn implies another interesting lower bound on $\lambda_p(\Omega)$, this time in terms of the quantity
\[
\frac{P(\Omega)}{|\Omega|},
\]
where $P(\Omega)$ is the perimeter of $\Omega$. The resulting estimate, which seems to be new for $N\ge 3$ and $p\not =2$, is contained in Corollary \ref{coro:herschp} below.

\begin{remark}[Upper bound]
Up to now, we never mentioned the possibility of having an upper bound of the type
\[
\lambda_p(\Omega)\le \frac{C}{R_\Omega^p}.
\]
The reason is simple: such an estimate is indeed true and very simple to obtain in a sharp form, without any assumption on the set $\Omega$. Indeed, by definition of $\lambda_p$ it is easy to see that this is a monotone decreasing quantity, with respect to set inclusion. Thus, if $\Omega\subset\mathbb{R}^N$ is an open set with $R_\Omega<+\infty$, there exists a ball $B_{R_\Omega}(\xi)\subset \Omega$ and we have
\[
\lambda_p(\Omega)\le \lambda_p(B_{R_\Omega}(\xi)).
\]
If we now use the scaling properties of $\lambda_p$, the previous can be rewritten as
\[
\lambda_p(\Omega)\le \frac{\lambda_p(B_{1}(0))}{R_\Omega^p}.
\]
Observe that this estimate is sharp, equality being (uniquely) attained by balls.
\end{remark}

\subsection{Plan of the paper}
In Section \ref{sec:2} we introduce the notation used throughout the whole paper and the technical facts needed to handle the proof of Theorem \ref{teo:herschp}. Section \ref{sec:3} contains a rougher version of our main result, based on Hardy's inequality for convex sets. This is a sort of {\it divertissement}, that we think to be interesting in its own. The proof of Theorem \ref{teo:herschp} is then contained in Section \ref{sec:4}. We combine this result with a geometric estimate, to obtain a further lower bound on $\lambda_p$ of geometric nature: this is Section \ref{sec:5}, which also contains a lower bound on the {\it Cheeger constant}. Finally, in the last Section \ref{sec:6} we consider the same type of lower bound in terms of the inradius, with $\lambda_p$ replaced by a general Poincar\'e-Sobolev sharp constant. The paper ends with an open problem.

\begin{ack} 
We thank Berardo Ruffini for some comments on a preliminary version of this paper and for pointing out the reference \cite{Po}.
This paper evolved from a set of hand-written notes for a talk delivered during the conferences ``{\it Variational and PDE problems in Geometric Analysis}'' and ``{\it Recent advances in Geometric Analysis}'' held in June 2018 in Bologna and Pisa, respectively. The organizers Chiara Guidi \& Vittorio Martino and Andrea Malchiodi \& Luciano Mari are kindly acknowledged.
\end{ack}

\section{Preliminaries}
\label{sec:2}

\subsection{Notation}
For an open set $\Omega\subset\mathbb{R}^N$, we indicate by $|\Omega|$ its $N-$dimensional Lebesgue measure. For an open bounded set $\Omega\subset\mathbb{R}^N$ with Lipschitz boundary, we define the {\it distance function}
\[
d_\Omega(x)=\inf_{y\in\partial\Omega} |x-y|,\qquad x\in\Omega.
\]
Then we recall that the inradius $R_\Omega$ of $\Omega$ coincides with
\[
R_\Omega=\sup_{x\in\Omega} d_\Omega(x).
\]
We will set $\nu_\Omega(x)$ to be the outer normal versor at $\partial\Omega$, whenever this is well-defined.
\begin{definition}
We say that $\Omega\subset\mathbb{R}^N$ is an {\it open polyhedral convex set} if there exists a finite number of open half-spaces $\mathcal{H}_1,\dots,\mathcal{H}_k\subset\mathbb{R}^N$
such that
\[
\Omega=\bigcap_{i=1}^k \mathcal{H}_i\not=\emptyset.
\]
If $\Omega$ is an open polyhedral convex set, we say that $F\subset\partial\Omega$ is a {\it face of $\Omega$} if the following hold:
\begin{itemize}
\item $F\not=\emptyset$;
\vskip.2cm
\item $F\subset \partial\mathcal{H}_i$, for some $i=1,\dots,k$;
\vskip.2cm
\item for any $E\subset\partial \Omega\cap \partial\mathcal{H}_i$ such that $F\subset E$, we have $E=F$.
\end{itemize}
\end{definition}
If $\Omega\subset\mathbb{R}^N$ is an open convex set with $R_\Omega<+\infty$, we know that there exists $\xi\in\Omega$ such that $B_{R_\Omega}(\xi)\subset\Omega$. Accordingly, we define the {\it contact set}
\[
\mathcal{C}_{\Omega,\xi}=\partial\Omega\cap \partial B_{R_\Omega}(\xi).
\]
Finally, we recall the definition
\[
\pi_p=\inf_{\varphi\in C^1([0,1])\setminus\{0\}} \left\{\frac{\|\varphi'\|_{L^p((0,1))}}{\|\varphi\|_{L^p((0,1))}}\, :\, \varphi(0)=\varphi(1)=0\right\}.
\]
It is not difficult to see that
\begin{equation}
\label{pi}
\pi_1=\pi_\infty=2\qquad \mbox{ and }\qquad \pi_2=\pi,
\end{equation}
see Lemmas \ref{lm:pi1} and \ref{lm:pinfty}.
\subsection{A geometric lemma}
The following geometric result is one of the building blocks of the proof of Theorem \ref{teo:herschp}. It is a higher-dimensional analogue of a simple two-dimensional fact used by Hersch in \cite{He}.
 This is the same as \cite[Lemmas 5.2 \& 5.3]{BuGuMa}, to which we refer for the proof.
\begin{lemma}
\label{lm:madonne}
Let $\Omega\subset\mathbb{R}^N$ be an open bounded convex set. Let $\xi\in\Omega$ be such that $B_{R_\Omega}(\xi)\subset\Omega$. Then there exists $m\ge 2$ and $\{P^1,\dots,P^m\}\subset \mathcal{C}_{\Omega,\xi}$ distinct points such that the open polyhedral convex domain
\[
T=\bigcap_{i=1}^m \{x\in\mathbb{R}^N\, :\, \langle x-P^i,\nu_\Omega(P^i)\rangle<0\},
\]
has the following properties:
\begin{itemize}
\item $\Omega\subset T$;
\vskip.2cm
\item $R_T=R_\Omega$;
\vskip.2cm
\item every face of $T$ touches $\partial B_{R_\Omega}(\xi)$.
\vskip.2cm
\end{itemize}
\end{lemma}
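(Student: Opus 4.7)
The plan is to exploit the fact that the outer normal to $\Omega$ at any contact point is forced by the inscribed ball: for each $P\in\mathcal{C}_{\Omega,\xi}$, since $B_{R_\Omega}(\xi)\subset\Omega$ and $P$ lies on both boundaries, the unique tangent hyperplane to $\partial B_{R_\Omega}(\xi)$ at $P$ is simultaneously a supporting hyperplane to $\Omega$. Consequently $\nu_\Omega(P)=(P-\xi)/R_\Omega$, and each open half-space $\mathcal{H}^i=\{x\in\mathbb{R}^N\,:\,\langle x-P^i,\nu_\Omega(P^i)\rangle<0\}$ simultaneously contains $\Omega$ and $B_{R_\Omega}(\xi)$.

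The first substantive step is the claim $\xi\in\mathrm{conv}(\mathcal{C}_{\Omega,\xi})$. I would argue by contradiction: otherwise Hahn--Banach yields a unit vector $v$ with $\langle P-\xi,v\rangle\ge c>0$ uniformly on the compact set $\mathcal{C}_{\Omega,\xi}$. Splitting $\partial\Omega$ into a small neighborhood $U$ of the contact set, on which $\langle y-\xi,v\rangle\ge c/2$ by continuity, and its complement, on which $|y-\xi|\ge R_\Omega+\eta$ for some $\eta>0$ by compactness of $\partial\Omega$ and strict inequality off $\mathcal{C}_{\Omega,\xi}$, an expansion of $|y-\xi+\varepsilon v|^2$ shows $d_\Omega(\xi-\varepsilon v)>R_\Omega$ for small $\varepsilon>0$, contradicting the definition of $R_\Omega$. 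Carath\'eodory's theorem then produces $m\le N+1$ points $P^1,\dots,P^m\in\mathcal{C}_{\Omega,\xi}$ and weights $\lambda_i>0$ summing to $1$ with $\xi=\sum\lambda_iP^i$; writing $\nu^i=\nu_\Omega(P^i)$, this is equivalent to
\[
\sum_{i=1}^m \lambda_i\,\nu^i=0.
\]
The case $m=1$ is impossible since $|P^1-\xi|=R_\Omega>0$, so $m\ge 2$.

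Set $T=\bigcap_{i=1}^m\mathcal{H}^i$. The inclusion $\Omega\subset T$ is immediate, and $B_{R_\Omega}(\xi)\subset T$ gives $R_T\ge R_\Omega$. For the reverse inequality, suppose $B_r(\eta)\subset T$; testing at $\eta+r\nu^i\in\overline{T}$ yields $\langle \eta-P^i,\nu^i\rangle\le -r$ for each $i$. Taking the $\lambda_i$-weighted sum and using both $\sum\lambda_i\nu^i=0$ and the identity $\langle P^i-\xi,\nu^i\rangle=R_\Omega$ (which gives $\sum\lambda_i\langle P^i,\nu^i\rangle=R_\Omega+\langle\xi,\sum\lambda_i\nu^i\rangle=R_\Omega$), I obtain
\[
-r\ge\sum_{i=1}^m\lambda_i\langle\eta-P^i,\nu^i\rangle=-R_\Omega,
\]
so $r\le R_\Omega$, hence $R_T=R_\Omega$. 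For the face condition, any face of $T$ is contained in some $\partial\mathcal{H}^i$, and $P^i\in\overline{\Omega}\subset\overline{\mathcal{H}^j}$ for every $j$ combined with $P^i\in\partial\mathcal{H}^i$ places $P^i$ on $\partial T\cap\partial\mathcal{H}^i$, which is precisely the corresponding face; since $P^i\in\partial B_{R_\Omega}(\xi)$, that face touches the inscribed sphere.

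The step I expect to require the most care is the membership $\xi\in\mathrm{conv}(\mathcal{C}_{\Omega,\xi})$: the perturbation argument must uniformly control the shifted distance to all of $\partial\Omega$ at once, and this is where the boundedness of $\Omega$ enters essentially. Once this is in hand, the remainder is a clean convex-geometric computation driven by the two identities $\sum\lambda_i\nu^i=0$ and $\langle P^i-\xi,\nu^i\rangle=R_\Omega$.
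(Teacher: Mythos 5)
Your proof is correct. Note that the paper itself does not prove Lemma \ref{lm:madonne} but defers to \cite[Lemmas 5.2 \& 5.3]{BuGuMa}, so there is no in-paper argument to compare against; nevertheless, your route is the standard (and almost certainly the cited) one: show $\xi\in\mathrm{conv}(\mathcal{C}_{\Omega,\xi})$ via a perturbation/separation argument using boundedness, invoke Carath\'eodory to extract $m\le N+1$ contact points with $\xi=\sum\lambda_i P^i$ (equivalently $\sum\lambda_i\nu^i=0$), and then the identity $\langle P^i-\xi,\nu^i\rangle=R_\Omega$ together with the zero-sum of weighted normals yields both $R_T\le R_\Omega$ and the tangency of every face to $\partial B_{R_\Omega}(\xi)$. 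All steps check out: the separation step correctly exploits compactness of $\partial\Omega\setminus U$ to get the uniform margin $\eta$, the weighted-sum computation for $R_T\le R_\Omega$ is exactly right, and $P^i\in\partial T\cap\partial\mathcal{H}^i\cap\partial B_{R_\Omega}(\xi)$ shows each defining half-space contributes a genuine face that meets the sphere. You even recover Protter's bound $m\le N+1$, which the lemma as stated does not claim.
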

\begin{remark}
The previous result is similar to an analogous geometric lemma contained in Protter's paper, see \cite[page 68]{Pr}. Such a result in \cite{Pr} is credited to a private communication by David Gale, without giving a proof. It should be noticed that the statement in \cite{Pr} is slightly more precise, since it is said that $m$ can be chosen to be smaller than or equal to $N+1$.
However, in the statement contained \cite{Pr} the crucial feature that all the faces of $T$ touches the internal ball $B_{R_\Omega}(\xi)$ seems to have been accidentally omitted. For this reason we prefer to refer to the result proved in \cite{BuGuMa}.
\end{remark}

\subsection{Eigenvalues of special sets}
\begin{lemma}[Product sets]
\label{lm:product}
Let $1<p<+\infty$ and $k\in\{1,\dots,N-1\}$. We take the open set $\Omega=\mathbb{R}^{N-k}\times\omega$, with $\omega\subset \mathbb{R}^k$ open bounded set. Then we have
\[
\lambda_p(\Omega)=\lambda_p(\omega).
\]
\end{lemma}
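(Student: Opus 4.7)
The inequality $\lambda_p(\Omega)=\lambda_p(\omega)$ splits into two bounds, which I would prove separately.

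\textbf{Lower bound $\lambda_p(\Omega)\ge \lambda_p(\omega)$.} Write a generic point of $\Omega$ as $(x',y)$ with $x'\in\mathbb{R}^{N-k}$ and $y\in\omega$, and write $\nabla_y$ for the gradient in the $y$ variables. Fix $u\in C^\infty_0(\Omega)$. For each $x'\in\mathbb{R}^{N-k}$, the slice $y\mapsto u(x',y)$ belongs to $C^\infty_0(\omega)$, so by the variational characterization of $\lambda_p(\omega)$,
\[
\int_\omega|\nabla_y u(x',y)|^p\,dy\ \ge\ \lambda_p(\omega)\int_\omega|u(x',y)|^p\,dy.
\]
Integrating in $x'$ and using the pointwise inequality $|\nabla u|^p\ge |\nabla_y u|^p$ yields $\int_\Omega|\nabla u|^p\,dx\ge \lambda_p(\omega)\int_\Omega |u|^p\,dx$. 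Taking the infimum over $u$ gives the required bound.

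\textbf{Upper bound $\lambda_p(\Omega)\le \lambda_p(\omega)$.} Given $\varepsilon>0$, choose $\varphi\in C^\infty_0(\omega)\setminus\{0\}$ with $\|\nabla\varphi\|_{L^p(\omega)}^p\le (\lambda_p(\omega)+\varepsilon)\|\varphi\|_{L^p(\omega)}^p$. Pick any nonnegative $\eta\in C^\infty_0(\mathbb{R}^{N-k})\setminus\{0\}$, and for $R>0$ define the admissible test function
\[
u_R(x',y)=\eta\!\left(\frac{x'}{R}\right)\varphi(y)\in C^\infty_0(\Omega).
\]
After the change of variable $x'=Rz'$, both integrals scale by $R^{N-k}$ and the Rayleigh quotient becomes
\[
\frac{\displaystyle\int\!\!\int\bigl(\eta(z')^2|\nabla_y\varphi(y)|^2+R^{-2}|\nabla\eta(z')|^2\varphi(y)^2\bigr)^{p/2}\,dz'\,dy}{\|\eta\|_{L^p}^p\,\|\varphi\|_{L^p(\omega)}^p}.
\]
As $R\to+\infty$, the integrand converges pointwise to $\eta(z')^p|\nabla_y\varphi(y)|^p$ and is dominated (for $R\ge 1$) by the $L^1$ function $(\eta^2|\nabla_y\varphi|^2+|\nabla\eta|^2\varphi^2)^{p/2}$, which has compact support in $z'$ and in $y$. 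Lebesgue's dominated convergence theorem gives
\[
\lim_{R\to+\infty}\frac{\int_\Omega|\nabla u_R|^p\,dx}{\int_\Omega|u_R|^p\,dx}=\frac{\|\nabla\varphi\|_{L^p(\omega)}^p}{\|\varphi\|_{L^p(\omega)}^p}\le \lambda_p(\omega)+\varepsilon.
\]
Hence $\lambda_p(\Omega)\le \lambda_p(\omega)+\varepsilon$, and $\varepsilon$ is arbitrary.

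\textbf{Main point to watch.} The only non-routine step is the upper bound: a ``separated'' test function $\eta(x')\varphi(y)$ has a mixed gradient whose $L^p$ norm is not simply $\|\eta\|_{L^p}\|\nabla_y\varphi\|_{L^p}$ when $p\neq 2$, and naive subadditivity of $t\mapsto t^{p/2}$ would introduce an unwanted constant $C_p\ne 1$. Rescaling $\eta_R(x')=\eta(x'/R)$ and passing to the limit $R\to\infty$ by dominated convergence inside the full $p$th power is what gives the sharp value $\lambda_p(\omega)$ without losing any constant.
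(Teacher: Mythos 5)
Your proof is correct and follows essentially the same route as the paper's: slicing plus $|\nabla u|^p\ge|\nabla_y u|^p$ for the lower bound, and a separated test function $\eta(x'/R)\varphi(y)$ with $R\to\infty$ and dominated convergence for the upper bound (the paper merely normalizes its cutoff by $R^{(k-N)/p}$, which, as you observe, is irrelevant since the factor cancels in the Rayleigh quotient).
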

\begin{proof}
The proof is standard, we include it for completeness. 
\par
We use the notation $(x,y)\in\mathbb{R}^{N-k}\times\mathbb{R}^k$, for a point in $\mathbb{R}^N$.
We first prove that
\begin{equation}
\label{sopraomega}
\lambda_p(\Omega)\le \lambda_p(\omega).
\end{equation}
For every $\varepsilon>0$, we take $u_\varepsilon\in C^\infty_0(\omega)$ to be an almost optimal function for the problem on $\omega$, i.e.
\[
\int_\omega |\nabla_y u_\varepsilon|^p\,dy<\lambda_p(\omega)+\varepsilon\qquad \mbox{ and }\qquad \int_\omega |u_\varepsilon|^p\,dy=1.
\]
We take $\eta\in C^\infty_0(\mathbb{R})$  such that
\[
0\le \eta\le 1,\qquad \eta\equiv 1 \mbox{ on } \left[-\frac{1}{2},\frac{1}{2}\right],\qquad \eta\equiv 0 \mbox{ on } \mathbb{R}\setminus[-1,1],
\]
then for every $R>0$, we choose 
\[
\varphi(x,y)=\eta_R(|x|)\,u_\varepsilon(y),\qquad \mbox{ where }\ \eta_R(t)=R^{\frac{k-N}{p}}\,\eta\left(\frac{t}{R}\right).
\]
By using Fubini's Theorem, we obtain
\[
\lambda_p(\Omega)\le \frac{\displaystyle \int_{B_{R}(0)}\int_\omega\left(\left|\nabla _{x}\eta_R\left(|x|\right)\right|^2\,|u_\varepsilon(x_N)|^2+|\nabla_y u_\varepsilon(y)|^2\,\eta_R(|x|)^2\right)^\frac{p}{2}\,dx\,dy}{\displaystyle \int_{B_R(0)} \eta_R(|x|)^p\,dx},
\]
where $B_R(0)=\{x\in\mathbb{R}^{N-k}\, :\, |x|<R\}$.
We now use the definition of $\eta_R$ and the change of variables $x=R\,x'$, so to get
\[
\begin{split}
\lambda_p(\Omega)&\le \frac{\displaystyle \int_{B_1(0)}\int_\omega \left[R^{\frac{2}{p}\,(k-N)-2}\,\left|\eta'(|x'|)\right|^2\,|u_\varepsilon(y)|^2+R^{\frac{2}{p}\,(k-N)}\,|\nabla_y u_\varepsilon(y)|^2\,|\eta(|x'|)|^2\right]^\frac{p}{2}R^{N-k}\,dx'\,dy}{\displaystyle\int_{B_1(0)} \eta(|x'|)^p\,dx'}\\
&=\frac{\displaystyle \int_{B_1(0)}\int_0^1 \left[\frac{1}{R^2}\,\left|\eta'(|x'|)\right|^2\,|u_\varepsilon(y)|^2+|\nabla_y u_\varepsilon(y)|^2\,|\eta(|x'|)|^2\right]^\frac{p}{2}\,dx'\,dy}{\displaystyle\int_{B_1(0)}\,\eta(|x'|)^p\,dx'}.
\end{split}
\]
By taking the limit as $R$ goes to $+\infty$ and using the Dominated Convergence Theorem, from the previous estimate we get
\[
\lambda_p(\Omega)\le \frac{\displaystyle \int_{B_1(0)}\int_\omega |\nabla_y u_\varepsilon(y)|^p\,|\eta(|x'|)|^p\,dx'\,dy}{\displaystyle\int_{B_1(0)}\,\eta(|x'|)^p\,dx'}=\int_0^1 |\nabla_y u_\varepsilon|^p\,dy<\lambda_p(\omega)+\varepsilon.
\]
The arbitrariness of $\varepsilon>0$ implies \eqref{sopraomega}.
\vskip.2cm\noindent
We now prove the reverse inequality 
\begin{equation}
\label{sottoomega}
\lambda_p(\Omega)\ge \lambda_p(\omega).
\end{equation}
For every $\varepsilon>0$, we take $\varphi_\varepsilon\in C^\infty_0(\Omega)\setminus\{0\}$ such that
\[
\frac{\displaystyle\int_\Omega |\nabla \varphi_\varepsilon|^p\,dx\,dy}{\displaystyle\int_\Omega |\varphi_\varepsilon|^p\,dx\,dy}<\lambda_p(\Omega)+\varepsilon.
\]
Observe that
\[
\begin{split}
\int_{\Omega} |\nabla \varphi_\varepsilon|^p\,dx\,dy&\ge \int_{\mathbb{R}^{N-k}}\left(\int_\omega |\nabla_y \varphi_\varepsilon|^p\,d y\right)\,dx\\
&\ge \lambda_p(\omega)\,\int_{\mathbb{R}^{N-k}}\left(\int_\omega |\varphi_\varepsilon|^p\,d y\right)\,dx=\lambda_p(\omega)\,\int_\Omega|\varphi_\varepsilon|^p\,dx\,dy,
\end{split}
\]
where we used that $y\mapsto \varphi_\varepsilon(x,y)$ is admissible for the one-dimensional problem, for every $x$. We thus obtained
\[
\lambda_p(\omega)\le \lambda_p(\Omega)+\varepsilon.
\]
The arbitrariness of $\varepsilon>0$ implies \eqref{sottoomega}.
\end{proof}
The following technical result is the core of the proof of Theorem \ref{teo:herschp}. It enables to estimate from below an eigenvalue with mixed boundary conditions, when the set is a ``pyramid-like'' one. We have to pay attention to possibly unbounded sets. In what follows $W^{1,p}(\Omega)$ is the usual Sobolev space of $L^p(\Omega)$ functions, having their distributional gradient in $L^p(\Omega)$, as well.
\begin{lemma}
\label{lm:gale}
Let $\Sigma\subset\mathbb{R}^{N-1}$ be an open polyhedral convex set. Let $\xi=(\xi_1,\dots,\xi_N)\in\mathbb{R}^N$ be a point whose projection on $\mathbb{R}^{N-1}$ belongs to $\Sigma$ and such that $\xi_N>0$. We consider the $N-$dimensional polyhedral convex set
\[
T=\mathrm{convex\, hull}\big(\Sigma\cup \{\xi\}\big),
\]
and define
\[
\mu(T)=\inf_{u\in C^1(\overline{T})\cap W^{1,p}(T)\setminus\{0\}} \left\{\frac{\displaystyle\int_T |\nabla u|^p\,dx}{\displaystyle\int_T |u|^p\,dx}\, :\, u=0 \mbox{ on } \Sigma\right\}. 
\]
Then we have
\[
\mu(T)\ge \left(\frac{\pi_p}{2}\right)^p\,\frac{1}{(\xi_N)^p}.
\]
\end{lemma}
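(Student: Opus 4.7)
The plan is to parametrize $T$ by vertical line segments based on $\Sigma$, apply a one-dimensional Dirichlet--free Poincar\'e inequality along each segment, and conclude by Fubini. Writing $\xi'\in\mathbb{R}^{N-1}$ for the projection of $\xi$, the convex hull description of $T$ expresses every point of $T$ as $((1-s)\,y+s\,\xi',\, s\,\xi_N)$ for some $y\in\overline\Sigma$ and $s\in[0,1]$. In particular, writing a point of $T$ as $(z,t)\in\mathbb{R}^{N-1}\times\mathbb{R}$, its height is $t=s\,\xi_N\in[0,\xi_N]$. For each $z\in\Sigma$ set
\[
t^*(z)=\sup\{t\ge 0\, :\, (z,t)\in T\}\in (0,\xi_N],
\]
so that, by convexity of $T$, the set $T$ is (up to the base face $\Sigma\times\{0\}$) the disjoint union over $z\in\Sigma$ of the open vertical segments $\{z\}\times(0,t^*(z))$. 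The \emph{key geometric input} is the uniform bound $t^*(z)\le \xi_N$, which is immediate from the parametrization above.

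The second ingredient is the one-dimensional Dirichlet--free Poincar\'e inequality: for every $L>0$ and every $\varphi\in C^1([0,L])$ with $\varphi(0)=0$,
\[
\int_0^L |\varphi(t)|^p\,dt\le \left(\frac{2\,L}{\pi_p}\right)^p \int_0^L |\varphi'(t)|^p\,dt.
\]
By scaling it suffices to treat $L=1$; in that case the sharp constant is obtained by extending a minimizer (which satisfies the natural Neumann condition at $t=1$) evenly about $t=1$ to $[0,2]$, reducing to the full Dirichlet problem on $[0,2]$ and hence to the very definition of $\pi_p$ rescaled on an interval of length $2$.

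Given any admissible $u$ for $\mu(T)$, the slice $t\mapsto u(z,t)$ is $C^1$ on $[0,t^*(z)]$ and vanishes at $t=0$ since $(z,0)\in \Sigma$. Applying the preceding inequality with $L=t^*(z)\le \xi_N$ and using $|\partial_t u|\le |\nabla u|$ gives
\[
\int_0^{t^*(z)}|u(z,t)|^p\,dt\le \left(\frac{2\,\xi_N}{\pi_p}\right)^p\int_0^{t^*(z)}|\nabla u(z,t)|^p\,dt.
\]
Integrating over $z\in\Sigma$ and invoking Fubini yields
\[
\int_T |u|^p\,dx\le \left(\frac{2\,\xi_N}{\pi_p}\right)^p \int_T |\nabla u|^p\,dx,
\]
which is precisely the claimed bound $\mu(T)\ge (\pi_p/(2\,\xi_N))^p$.

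The main obstacle is in choosing the right foliation: a naive slicing by segments running from the apex $\xi$ down to points of $\Sigma$ would produce segments of length $\sqrt{|y-\xi'|^2+\xi_N^2}$, which is not controlled by $\xi_N$ and would yield only a coarser estimate. Using the vertical direction instead — whose length is bounded a priori by $\xi_N$ thanks to the parametrization — is what gives the sharp factor. The unboundedness of $\Sigma$ (and hence of $T$) plays no role, since the one-dimensional Poincar\'e inequality is applied fiberwise and then integrated.
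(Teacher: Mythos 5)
Your proof is correct and follows essentially the same route as the paper: slice $T$ by vertical segments above each $z\in\Sigma$ (the paper writes the fiber length as $\Psi(x')\le\xi_N$ with $\Psi$ affine, which is your $t^*(z)$), apply the one-endpoint-fixed one-dimensional Poincar\'e inequality with constant $(\pi_p/2)^p$ on each fiber, use $|\partial_{x_N}u|\le|\nabla u|$, and integrate via Fubini. The only cosmetic difference is that you include a short reflection argument to derive the one-dimensional inequality with one free endpoint, whereas the paper simply cites it from \cite[Lemma A.1]{B}.
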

\begin{proof}
By recalling the definition of $\pi_p$, we have that for $a>0$ and for every $\varphi\in C^1([0,a])$ such that $\varphi(0)=0$ it holds
\begin{equation}
\label{oned}
\int_{0}^a |\varphi'(t)|^p\ge \left(\frac{\pi_p}{2}\right)^p\,\frac{1}{a^p}\,\int_0^a |\varphi(t)|^p\,dt,
\end{equation}
see \cite[Lemma A.1]{B}.
We now take a function $u\in C^1(\overline{T})\cap W^{1,p}(T)$ which is admissible for the problem defining $\mu(T)$. By hypothesis, there exists an affine function $\Psi:\Sigma\to [0,\xi_N]$ such that
\[
T=\left\{(x',x_N)\, :\, \mathbb{R}^{N-1}\times\mathbb{R}\, :\, x'\in \Sigma,\, 0<x_N<\Psi(x')\right\}.
\]
Thus by Fubini's Theorem and \eqref{oned} we have
\[
\begin{split}
\int_T |\nabla u|^p\,dx&\ge \int_T |u_{x_N}|^p\,dx= \int_\Sigma\left(\int_{0}^{\Psi(x')} |u_{x_N}|^p\,dx_N\right)\,dx'\\
&\ge \int_\Sigma\left( \left(\frac{\pi_p}{2}\right)^p\,\frac{1}{\Psi(x')^p}\,\int_{0}^{\Psi(x')} |u|^p\,dx_N\right)\,dx'\\
&\ge \left(\frac{\pi_p}{2}\right)^p\,\frac{1}{\xi_N^p}\,\int_\Sigma\left( \int_{0}^{\Psi(x')} |u|^p\,dx_N\right)\,dx'=\left(\frac{\pi_p}{2}\right)^p\,\frac{1}{\xi_N^p}\,\int_T |u|^p\,dx.
\end{split}
\]
By taking the infimum over admissible functions $u$, we get the desired conclusion.
\end{proof}

\section{A {\it divertissement} on Hardy's inequality}
\label{sec:3}

Before proving the sharp estimate {\it \`a la} Hersch-Protter \eqref{herschp}, we present a rougher estimate. This is a consequence of {\it Hardy's inequality for convex sets}. Even if the resulting estimate is not sharp, we believe that the proof has its own interest and we reproduce it for the reader's convenience.
\begin{proposition}
\label{prop:hardy}
Let $1<p<\infty$ and let $\Omega\subset\mathbb{R}^N$ be an open bounded convex set. Then we have
\[
\left(\frac{p-1}{p}\right)^p\,\frac{1}{R_\Omega^p}\le \lambda_p(\Omega).
\]
\end{proposition}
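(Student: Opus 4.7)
The plan is to reduce the statement to the sharp Hardy inequality for convex sets, which asserts that for any open convex set $\Omega\subset\mathbb{R}^N$ and any $u\in C^\infty_0(\Omega)$,
\[
\int_\Omega \frac{|u|^p}{d_\Omega(x)^p}\,dx \le \left(\frac{p}{p-1}\right)^p \int_\Omega |\nabla u|^p\,dx,
\]
with the dimensional constant $(p/(p-1))^p$ being sharp (this is the classical result of Marcus--Mizel--Pinchover, or equivalently the convex-set Hardy inequality of Matskewich--Sobolevskii). I would quote this as the single external input.

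From this, the proof is essentially a one-liner. First I would observe that, by the very definition of the inradius,
\[
d_\Omega(x)\le R_\Omega \qquad \text{for every } x\in\Omega,
\]
so that pointwise $|u|^p/R_\Omega^p \le |u|^p/d_\Omega(x)^p$. Integrating over $\Omega$ and combining with Hardy's inequality gives
\[
\frac{1}{R_\Omega^p}\int_\Omega |u|^p\,dx \;\le\; \int_\Omega \frac{|u|^p}{d_\Omega(x)^p}\,dx \;\le\; \left(\frac{p}{p-1}\right)^p \int_\Omega |\nabla u|^p\,dx.
\]
Rearranging yields
\[
\left(\frac{p-1}{p}\right)^p\frac{1}{R_\Omega^p}\int_\Omega |u|^p\,dx \le \int_\Omega |\nabla u|^p\,dx,
\]
for every $u\in C^\infty_0(\Omega)\setminus\{0\}$. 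Taking the infimum of the ratio on the right over such $u$ produces exactly $\lambda_p(\Omega)$, which gives the claimed lower bound.

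There is no real obstacle beyond invoking the convex-set Hardy inequality with its sharp constant; the only thing worth emphasising in the write-up is \emph{why} the constant $((p-1)/p)^p$ is known to be non-sharp for $\lambda_p(\Omega)/R_\Omega^{-p}$: Hardy's constant is attained only in a degenerate asymptotic regime (functions concentrating at a single boundary point), whereas the Poincaré-type inequality forces $|u|$ to decay across the full width of $\Omega$, leaving room for the improved constant $(\pi_p/2)^p$ proved later in Theorem~\ref{teo:herschp}. This qualitative remark explains the gap and motivates the finer argument of Section~\ref{sec:4}, but it is not needed for the present proof.
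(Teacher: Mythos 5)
Your proposal is correct and follows exactly the paper's argument: invoke the convex-set Hardy inequality with constant $\left(\frac{p-1}{p}\right)^p$, observe that $d_\Omega\le R_\Omega$, and take the infimum over test functions. The only difference is that the paper also includes a self-contained proof of the Hardy inequality (via weak $p$-superharmonicity of $d_\Omega$ and Young's inequality), whereas you cite it as an external input, but the reduction is identical.
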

\begin{proof}
We recall that the following Hardy's inequality holds for a convex set
\begin{equation}
\label{hardy}
\left(\frac{p-1}{p}\right)^p\,\int_\Omega \left|\frac{u}{d_\Omega}\right|^p\,dx\le \int_\Omega |\nabla u|^p\,dx,\qquad \mbox{ for every } u\in C^\infty_0(\Omega).
\end{equation}
By using this inequality, it is easy to obtain the claimed estimate.
By recalling that 
\[
R_\Omega=\|d_\Omega\|_{L^\infty(\Omega)},
\]
from \eqref{hardy} we get
\[
\left(\frac{p-1}{p}\right)^p\,\frac{1}{R_\Omega^p}\,\int_\Omega |u|^p\,dx<\int_\Omega |\nabla u|^p\,dx.
\]
By taking the infimum over admissible test functions, we finally obtain the lower bound on $\lambda_p(\Omega)$.
\par
For completeness, we now recall how to prove \eqref{hardy}. Let us consider the distance function
\[
d_\Omega(x)=\min_{y\in\partial \Omega} |x-y|,\qquad x\in\Omega.
\] 
This is a $1-$Lipschitz function, which is concave on $\Omega$, due to the convexity of $\Omega$. This implies that $d_\Omega$ is weakly superharmonic, i.e.
\[
\int_\Omega \langle \nabla d_\Omega,\nabla \varphi\rangle\,dx\ge 0,
\]
for every nonnegative $\varphi\in C^\infty_0(\Omega)$. By observing that 
\begin{equation}
\label{eikonal}
|\nabla d_\Omega|=1,\qquad \mbox{ almost everywhere in }\Omega,
\end{equation}
from the previous inequality we also get
\begin{equation}
\label{pweak}
\int_\Omega \langle |\nabla d_\Omega|^{p-2}\,\nabla d_\Omega,\nabla \varphi\rangle\,dx\ge 0,
\end{equation}
for every nonnegative $\varphi\in C^\infty_0(\Omega)$, i.e. $d_\Omega$ is weakly $p-$superharmonic as well. By a standard density argument, we easily see that we can enlarge the class of test functions up to  $\varphi\in W^{1,p}_0(\Omega)$, i.e. the closure of $C^\infty_0(\Omega)$ in $W^{1,p}(\Omega)$.
\par
We now insert in \eqref{pweak} the test function
\[
\varphi=\frac{|u|^p}{(d_\Omega+\varepsilon)^{p-1}},
\]
where $u\in C^\infty_0(\Omega)$ and $\varepsilon>0$. We thus obtain
\[
0\le -(p-1)\,\int_\Omega \left|\frac{\nabla d_\Omega}{d_\Omega+\varepsilon}\right|^p\,|u|^p\,dx+p\,\int_\Omega \left\langle \frac{|\nabla d_\Omega|^{p-2}\,\nabla d_\Omega}{(d_\Omega+\varepsilon)^{p-1}}, \nabla u\right\rangle\,|u|^{p-2}\,u\,dx.
\]
that is
\[
\int_\Omega \left|\frac{\nabla d_\Omega}{d_\Omega+\varepsilon}\right|^p\,|u|^p\,dx\le \frac{p}{p-1}\,\int_\Omega \left|\left\langle \frac{|\nabla d_\Omega|^{p-2}\,\nabla d_\Omega}{(d_\Omega+\varepsilon)^{p-1}}, \nabla u\right\rangle\right|\,|u|^{p-1}\,dx.
\]
We can now use Young's inequality in the following form
\[
|\langle a,b\rangle|\le \delta\,\frac{p-1}{p}\,|a|^\frac{p}{p-1}+\frac{\delta^{1-p}}{p}\,|b|^p,\qquad \mbox{ for } a,b\in\mathbb{R}^N,\, \delta>0.
\]
This yields
\[
\int_\Omega \left|\frac{\nabla d_\Omega}{d_\Omega+\varepsilon}\right|^p\,|u|^p\,dx\le \delta\,\int_\Omega \left|\frac{\nabla d_\Omega}{d_\Omega+\varepsilon}\right|^p\,|u|^p\,dx+\frac{\delta^{1-p}}{p-1}\,\int_\Omega |\nabla u|^p\,dx,
\]
which can be recast into
\[
(p-1)\,\delta^{p-1}\,(1-\delta)\,\int_\Omega \left|\frac{\nabla d_\Omega}{d_\Omega+\varepsilon}\right|^p\,|u|^p\,dx\le \int_\Omega |\nabla u|^p\,dx.
\]
Finally, we observe that the quantity $\delta^{p-1}\,(1-\delta)$ is maximal for 
\[
\delta=\frac{p-1}{p},
\]
thus by taking the limit as $\varepsilon$ goes to $0$ and recalling \eqref{eikonal}, by Fatou's Lemma we end up with \eqref{hardy}, as desired.
\end{proof}
\begin{remark}
We observe that the boundedness of $\Omega$ can be dropped, both in \eqref{hardy} and in the lower bound on $\lambda_p(\Omega)$.
We also point out that, even if the constant
\[
\left(\frac{p-1}{p}\right)^p,
\]
is not sharp, it only depends on $p$, just like the sharp one.
\end{remark}

\section{Proof of Theorem \ref{teo:herschp}}
\label{sec:4}
We start with a particular case of Theorem \ref{teo:herschp}, when the convex set is {\it polyhedral}. Its proof heavily relies on Lemma \ref{lm:gale}.
\begin{proposition}
\label{prop:fundamental}
Let $1<p<+\infty$ and let $T\subset\mathbb{R}^N$ be an open polyhedral convex set. We suppose that $R_T<+\infty$ and we assume further that there exists a ball $B\subset T$ with radius $R_T$ and such that each face of $T$ touches $B$. Then we have
\[
\lambda_p(T)\ge \left(\frac{\pi_p}{2}\right)^p\,\frac{1}{R_T^p}.
\]
\end{proposition}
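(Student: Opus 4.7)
The plan is to decompose $T$ into finitely many sub-pyramids, each with apex at $\xi$ (the center of the inscribed ball) and base equal to one of the faces of $T$, and then to apply Lemma \ref{lm:gale} piecewise. By hypothesis, we have $B_{R_T}(\xi)\subset T$ and every face $F_i$ of $T$ (for $i=1,\dots,k$) touches $\partial B_{R_T}(\xi)$ at a contact point $P^i$. Since $F_i$ lies in the supporting hyperplane to the ball at $P^i$, the distance from $\xi$ to the affine hull of $F_i$ equals exactly $R_T$, and the outward unit normal to $F_i$ is $\nu_i = (P^i-\xi)/R_T$.

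For each $i$, I set
\[
T_i = \mathrm{convex\, hull}(F_i \cup \{\xi\}).
\]
I claim that the $T_i$'s have pairwise disjoint interiors and that $T=\bigcup_{i=1}^k T_i$ modulo a Lebesgue-negligible set. For each $x\in T\setminus\{\xi\}$, the ray issuing from $\xi$ through $x$ either (a) exits $T$ across the relative interior of a unique face $F_i$, in which case $x$ lies in the interior of $T_i$, or (b) never exits $T$, meaning its direction belongs to the recession cone of $T$. The hypothesis $R_T<+\infty$ forces this recession cone to have empty interior in $\mathbb{R}^N$ (otherwise $T$ would contain arbitrarily large balls), so case (b) describes a set of Lebesgue measure zero; disjointness of interiors is immediate from the uniqueness of the exit face in case (a).

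Now fix a test function $u\in C^\infty_0(T)$. Since $u$ is smooth with compact support in $T$, it vanishes on $\partial T\supset F_i$, and $u|_{T_i}\in C^1(\overline{T_i})\cap W^{1,p}(T_i)$ is admissible for the quantity $\mu(T_i)$ of Lemma \ref{lm:gale}. A rigid motion bringing $F_i$ into the hyperplane $\{x_N=0\}$ sends $\xi$ to a point of height $\xi_N = R_T$ whose projection $P^i$ lies in $\overline{F_i}$; this puts $T_i$ exactly in the framework of Lemma \ref{lm:gale}, which yields
\[
\int_{T_i} |\nabla u|^p\,dx \ge \left(\frac{\pi_p}{2}\right)^p\,\frac{1}{R_T^p}\,\int_{T_i}|u|^p\,dx.
\]
Summing over $i$ and using the essentially disjoint covering yields the same estimate for the integrals on $T$, and taking the infimum over $u\in C^\infty_0(T)\setminus\{0\}$ gives the claim. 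The principal obstacle I expect is the verification of the pyramidal decomposition — in particular, showing that the \emph{escaping} directions from $\xi$ (which matter for unbounded polyhedra such as infinite slabs or half-slabs) form a negligible set, a fact that relies precisely on the bounded-inradius assumption.
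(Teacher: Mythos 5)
Your proof is correct and follows essentially the same route as the paper: the same pyramidal decomposition $T_i=\mathrm{convex\,hull}(F_i\cup\{\xi\})$ around the incenter $\xi$, the same piecewise application of Lemma~\ref{lm:gale} (using that the tangency forces the apex to sit at height exactly $R_T$ over each face), followed by summation over the pieces. The only material difference is one of exposition: the paper asserts the essentially-disjoint covering $T=\bigcup_i T_i$ without comment and phrases the conclusion as an averaging of Rayleigh quotients, whereas you supply the recession-cone argument needed to justify the covering when $T$ is unbounded and then simply add the piecewise inequalities — both minor, and your version is if anything the more complete.
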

\begin{proof}
Let us indicate by $F_1,\dots,F_j\subset\partial T$ the faces of $T$. We take the center $\xi$ of $B$ and then define 
\[
T_i=\mathrm{convex\, hull}\big(F_i\cup \{\xi\}\big),\qquad i=1,\dots,j,
\]
see Figures \ref{fig:2} and \ref{fig:3}.
\begin{figure}
\includegraphics[scale=.3]{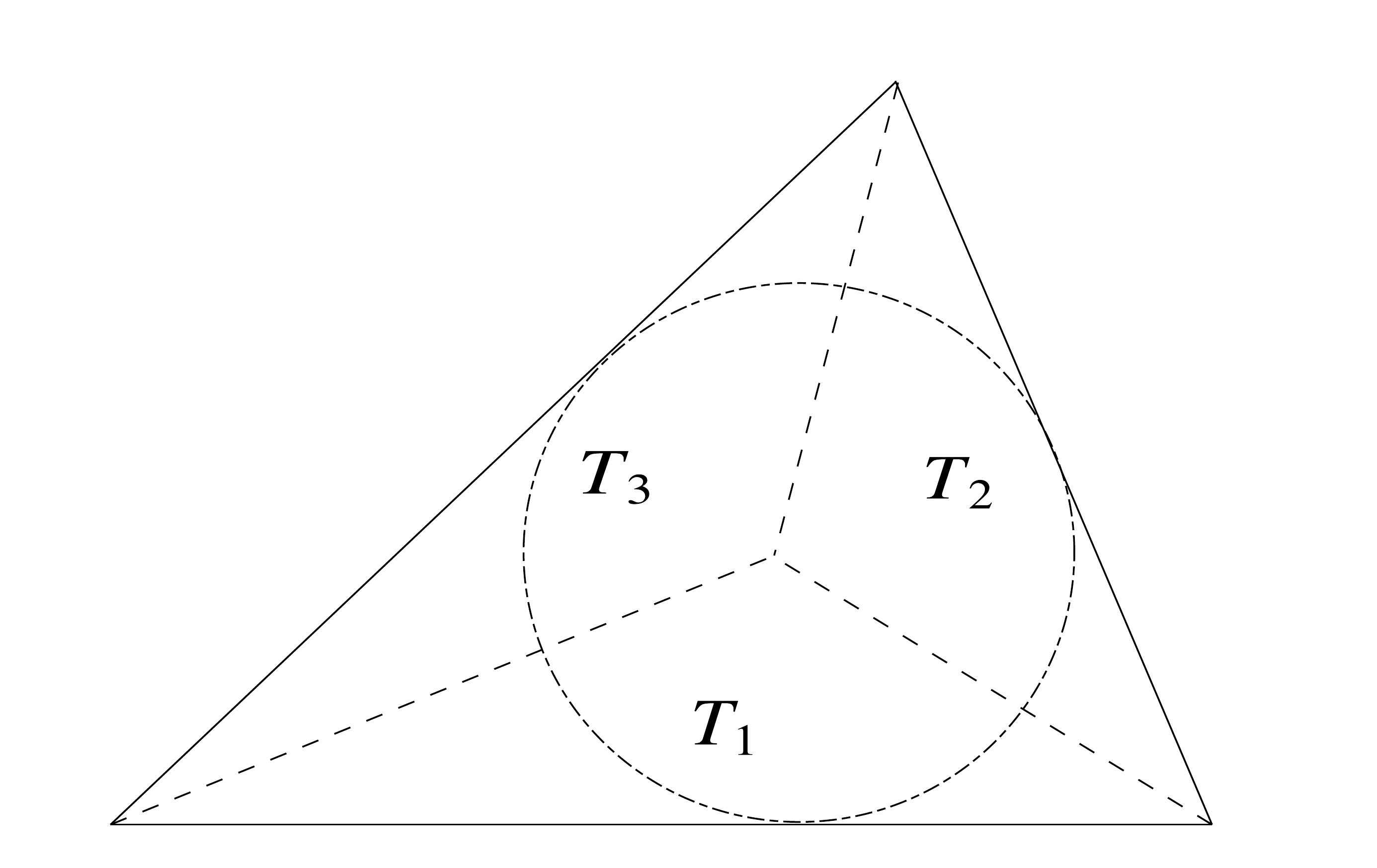}
\caption{The construction for the proof of Proposition \ref{prop:fundamental}, when $N=2$ and $T$ has $j=3$ faces.}
\label{fig:2}
\end{figure}
\begin{figure}
\includegraphics[scale=.3]{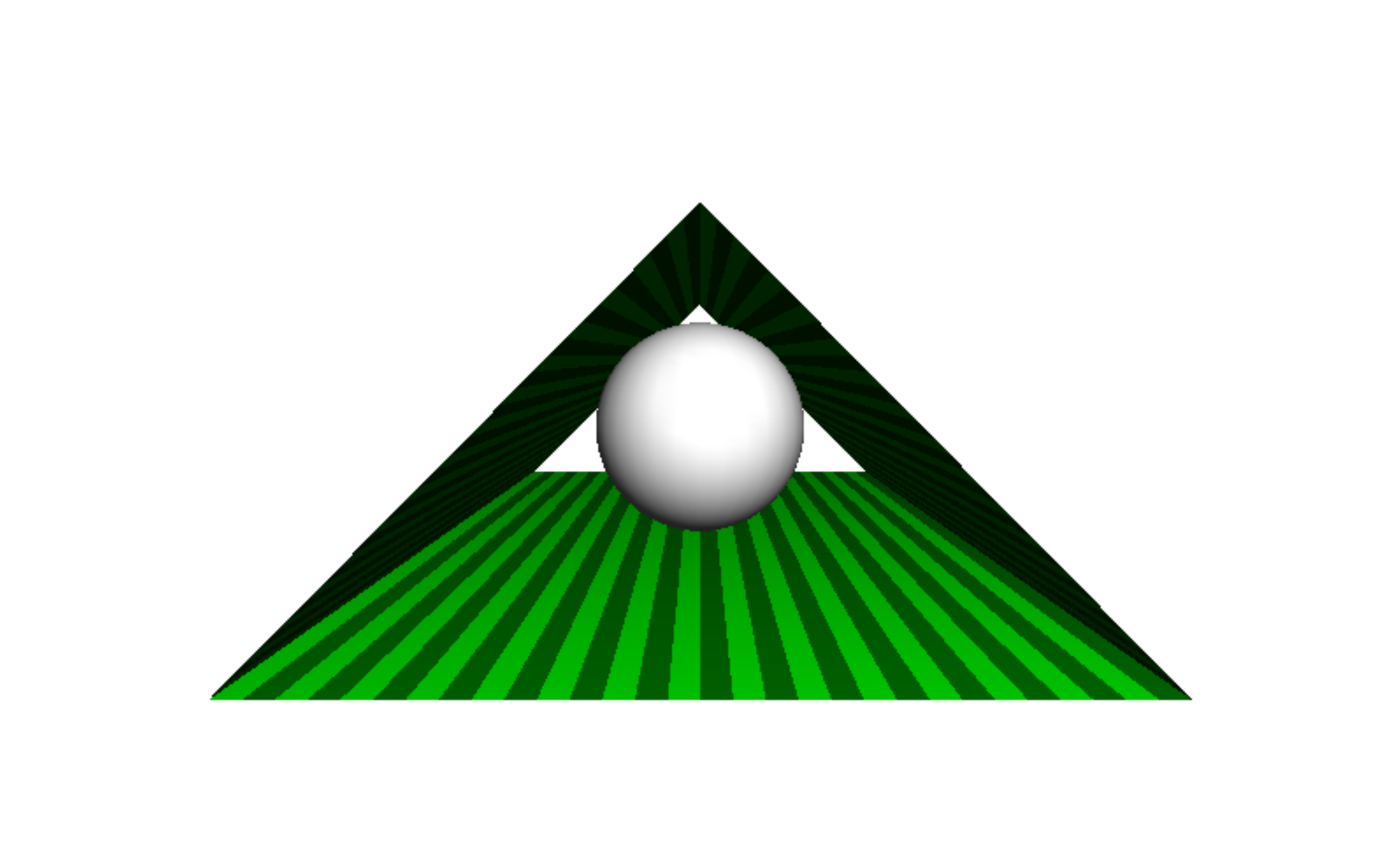}
\caption{The construction for the proof of Proposition \ref{prop:fundamental}, when $N=3$ and $T$ is an unbounded set with $j=3$ faces. In this case, the subsets $T_1,T_2,T_3$ (not drawn in the picture) are unbounded, as well.}
\label{fig:3}
\end{figure}
We now consider $T_i$ for a fixed $i=1,\dots,j$ and estimate from below
\[
\mu_i=\inf_{u\in C^1(\overline{T_i})\cap W^{1,p}(T_i)\setminus\{0\}} \left\{\frac{\displaystyle\int_{T_i} |\nabla u|^p\,dx}{\displaystyle\int_{T_i} |u|^p\,dx}\, :\, u=0 \mbox{ on } F_i\right\}.
\]
Up to a rigid motion, we can assume that $T_i$ satisfies the assumptions of Lemma \ref{lm:gale}. Observe that in this case, we have
\[
\xi_N=R_T,
\] 
by construction.
Thus we get
\begin{equation}
\label{egole}
\mu_i\ge \left(\frac{\pi_p}{2}\right)^p\,\frac{1}{(\xi_N)^p}= \left(\frac{\pi_p}{2}\right)^p\,\frac{1}{R_T^p}.
\end{equation}
On the other hand, for every $\varepsilon>0$, we take $\varphi_\varepsilon\in C^\infty_0(T)\setminus\{0\}$ such that
\[
\lambda_p(T)\le \frac{\displaystyle\int_T |\nabla \varphi_\varepsilon|^p\,dx}{\displaystyle \int_T |\varphi_\varepsilon|^p\,dx}\le \lambda_p(T)+\varepsilon.
\] 
We observe that the restriction of $\varphi_\varepsilon$ to each $T_i$ is admissible for the problem defining $\mu_i$.
Then, we obtain
\[
\begin{split}
\lambda_p(T)+\varepsilon&\ge \frac{\displaystyle\int_T |\nabla \varphi_\varepsilon|^p\,dx}{\displaystyle \int_T |\varphi_\varepsilon|^p\,dx}=\frac{\displaystyle\sum_{i=1}^j\int_{T_i} |\nabla \varphi_\varepsilon|^p\,dx}{\displaystyle \sum_{i=1}^j\int_{T_i} |\varphi_\varepsilon|^p\,dx}\ge\frac{\displaystyle\sum_{i=1}^j \mu_i\,\int_{T_i} |\varphi_\varepsilon|^p\,dx}{\displaystyle \sum_{i=1}^j\int_T |\varphi_\varepsilon|^p\,dx} \ge\min_{i=1,\dots,j}\ \mu_i.
\end{split}
\]
By recalling the lower bound \eqref{egole}, we get the the desired conclusion, thanks to the arbitrariness of $\varepsilon>0$.
\end{proof}
We eventually come to the proof of Theorem \ref{teo:herschp}.
\begin{proof}[Proof of Theorem \ref{teo:herschp}]
We first prove the inequality and then analyze the equality cases.
\vskip.2cm\noindent
{\bf Part 1: proof of the inequality.}  Let us first assume that $\Omega$ is bounded. By appealing to Lemma \ref{lm:madonne}, we know that there exists $T\subset \mathbb{R}^N$ an open polyhedral convex set such that 
\[
\Omega\subset T\qquad \mbox{ and }\qquad R_\Omega=R_T.
\]
Moreover, each face of $T$ touches a maximal ball $B_{R_\Omega}(\xi)$.  By applying Proposition \ref{prop:fundamental} to the set $T$, we get
\[
\lambda_p(\Omega)\ge \lambda_p(T)\ge \left(\frac{\pi_p}{2}\right)^p\,\frac{1}{R_T^p}=\left(\frac{\pi_p}{2}\right)^p\,\frac{1}{R_\Omega^p}.
\]
This concludes the proof, in the case $\Omega$ is bounded. 
\par
If $\Omega$ in unbounded, we can suppose that $R_\Omega<+\infty$, otherwise there is nothing to prove. Then we can consider the bounded set $\Omega_R=\Omega\cap B_R(0)$ for $R$ large enough. By applying
\[
\lambda_p(\Omega_R)\ge \left(\frac{\pi_p}{2}\right)^p\,\frac{1}{R_{\Omega_R}^p},
\]  
and taking on both sides the limit as $R$ goes to $+\infty$, we get the conclusion.
\vskip.2cm\noindent
{\bf Part 2: sharpness of the inequality.} It is easy to see that equality is attained on a slab. Indeed, by Lemma \ref{lm:product} we have
\[
\lambda_p(\mathbb{R}^{N-1}\times (0,1))=\lambda_p((0,1))=\Big(\pi_p\Big)^p\qquad \mbox{ and }\qquad R_{\mathbb{R}^{N-1}\times (0,1)}=\frac{1}{2}.
\]
As for the ``collapsing pyramids''
\[
C_\alpha=\mathrm{convex\, hull}\Big((-1,1)^{N-1}\cup \{(0,\dots,0,\alpha)\}\Big),
\]
we are going to use a purely variational argument, thus we not need the explicit determination of $\lambda_p$ for these sets. We first observe that 
\[
C_\alpha\subset\mathbb{R}^{N-1} \times(0,\alpha),
\]
thus we have
\[
\lambda_p(C_\alpha)\ge \lambda_p(\mathbb{R}^{N-1} \times(0,\alpha))=\left(\frac{\pi_p}{\alpha}\right)^p.
\]
In order to prove the reverse estimate, we observe that for $0<\alpha<1$
\[
Q_\alpha:=\Big(-(1-\sqrt{\alpha}),1-\sqrt{\alpha}\Big)^{N-1}\times \Big(0,\alpha\,(1-\sqrt{\alpha})\Big)\subset C_\alpha,
\]
thus by monotonicity and scaling
\[
\lambda_q(C_\alpha)\le \lambda_p(Q_\alpha)=\Big(\alpha\,(1-\sqrt{\alpha})\Big)^{-p}\,\lambda_p\left(\left(-\frac{1}{\alpha},\frac{1}{\alpha}\right)^{N-1}\times (0,1)\right).
\]
By observing that 
\[
\lim_{\alpha\to 0^+}\lambda_p\left(\left(-\frac{1}{\alpha},\frac{1}{\alpha}\right)^{N-1}\times (0,1)\right)=\lambda_p(\mathbb{R}^{N-1}\times(0,1))=\Big(\pi_p\Big)^p,
\]
we thus get that
\[
\lambda_p(Q_\alpha)\sim \left(\frac{\pi}{\alpha}\right)^p,\qquad \mbox{ for }\alpha\to 0^+.
\]
In conclusion, we obtained that 
\[
\lim_{\alpha\to 0^+} \alpha^p\,\lambda_p(C_\alpha)=\Big(\pi_p\Big)^p.
\]
We are left with observing that 
\[
R_{C_\alpha}=\frac{\alpha}{1+\sqrt{1+\alpha^2}}\sim \frac{\alpha}{2},\qquad \mbox{ for } \alpha \to 0^+.
\]
This concludes the proof of the optimality of the sequence $\{C_\alpha\}_\alpha$.
\par
Finally, we observe that for the sets 
\[
\mathbb{R}^{N-k}\times C_\alpha,\qquad\mbox{ for } N\ge 3 \mbox{ and } 2\le k\le N-1,
\]
it is sufficient to use the computations above and the fact that by Lemma \ref{lm:product}
\[
\lambda_p(\mathbb{R}^{N-k}\times C_\alpha)=\lambda_p(C_\alpha),
\]
together with $R_{\mathbb{R}^{N-k}\times C_\alpha}=R_{C_\alpha}$.
\end{proof}
\begin{remark}
By comparing the sharp estimate \eqref{hersch} with the estimate of Proposition \ref{prop:hardy}, we get
\[
\frac{\pi_p}{2}>\frac{p-1}{p}.
\]
By recalling \eqref{pi}, we have that both sides converge to $1$, as $p$ goes to $+\infty$. This shows that even if the estimate of Proposition \ref{prop:hardy} is not sharp for every finite $p$, it is ``asymptotically'' optimal for $p\to +\infty$.
\end{remark}

\section{A further lower bound}
\label{sec:5}

It what follows, we will use the notation $P(\Omega)$ to denote the distributional perimeter of a set $\Omega\subset\mathbb{R}^N$. On convex sets, this coincides with the $(N-1)-$dimensional Hausdorff measure of the boundary.
\par
We recall that for bounded convex sets, it is possible to bound $\lambda_p(\Omega)$ {\it from above} in terms of the isoperimetric--type ratio 
\[
\frac{P(\Omega)}{|\Omega|}.
\]
Namely, we have
\[
\lambda_p(\Omega)<\left(\frac{\pi_p}{2}\right)^p\,\left(\frac{P(\Omega)}{|\Omega|}\right)^p,
\]
see \cite[Main Theorem]{B} and \cite[Theorem 4.1]{DPG}. The inequality is strict and the estimate is sharp.
\vskip.2cm\noindent
As a straightforward consequence of Theorem \ref{teo:herschp}, we get that the previous estimate can be reverted. Thus
\[
\lambda_p(\Omega) \qquad \mbox{ and }\qquad \left(\frac{P(\Omega)}{|\Omega|}\right)^p,
\]
are equivalent quantities on open bounded convex sets. For $N=p=2$, this result is due to Makai, see \cite{Ma}. For all the other cases, to the best of our knowledge it is new.
\begin{corollary}
\label{coro:herschp}
Let $1<p<+\infty$ and let $\Omega\subset\mathbb{R}^N$ be an open bounded convex set. Then we have
\begin{equation}
\label{hc}
\lambda_p(\Omega)\ge \left(\frac{\pi_p}{2\,N}\right)^p\,\left(\frac{P(\Omega)}{|\Omega|}\right)^p.
\end{equation}
The inequality is sharp, equality being attained asymptotically by the sequence of ``collapsing pyramids'' of Theorem \ref{teo:herschp}. 
\end{corollary}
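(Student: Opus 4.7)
The proof plan is to combine Theorem \ref{teo:herschp} with a standard geometric inequality relating the inradius to the isoperimetric ratio on convex sets. Specifically, the key geometric lemma is that for any open bounded convex set $\Omega\subset\mathbb{R}^N$ one has
\[
\frac{P(\Omega)}{|\Omega|}\le \frac{N}{R_\Omega}.
\]
Granting this, from Theorem \ref{teo:herschp} we immediately obtain
\[
\lambda_p(\Omega)\ge \left(\frac{\pi_p}{2}\right)^p\,\frac{1}{R_\Omega^p}\ge \left(\frac{\pi_p}{2}\right)^p\,\frac{1}{N^p}\,\left(\frac{P(\Omega)}{|\Omega|}\right)^p,
\]
which is exactly \eqref{hc}.

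To prove the geometric inequality, I would fix $\xi\in\Omega$ such that $B_{R_\Omega}(\xi)\subset\Omega$ and apply the divergence theorem to the vector field $F(x)=x-\xi$, yielding
\[
N\,|\Omega|=\int_{\partial\Omega}\langle x-\xi,\nu_\Omega(x)\rangle\,d\mathcal{H}^{N-1}(x).
\]
By convexity, the supporting hyperplane to $\Omega$ at a boundary point $x$ (where $\nu_\Omega$ is defined) leaves $B_{R_\Omega}(\xi)$ on the same side as $\Omega$, hence its distance to $\xi$ is at least $R_\Omega$. This distance equals $\langle x-\xi,\nu_\Omega(x)\rangle$, so the integrand is pointwise $\ge R_\Omega$, giving $N\,|\Omega|\ge R_\Omega\,P(\Omega)$. (For faces where $\nu_\Omega$ is not uniquely defined, one argues by approximation with smooth convex sets, or directly using the cone decomposition from $\xi$.)

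For sharpness, I would carry out an explicit asymptotic computation on the collapsing pyramids $C_\alpha$. The volume is $|C_\alpha|=2^{N-1}\alpha/N$, and as $\alpha\to 0^+$ each of the $2(N-1)$ lateral faces approaches a flat $(N-1)$-dimensional pyramid of base $2^{N-2}$ and height $\to 1$, hence lateral area $\to 2^{N-2}/(N-1)$. Adding the base contribution $2^{N-1}$, one finds $P(C_\alpha)\to 2^N$, and therefore
\[
\frac{P(C_\alpha)}{|C_\alpha|}\sim \frac{2\,N}{\alpha},\qquad \alpha\to 0^+.
\]
Combined with $\lambda_p(C_\alpha)\sim (\pi_p/\alpha)^p$ proved in Theorem \ref{teo:herschp}, this gives
\[
\lim_{\alpha\to 0^+}\frac{\lambda_p(C_\alpha)}{\left(\dfrac{P(C_\alpha)}{|C_\alpha|}\right)^p}=\left(\frac{\pi_p}{2\,N}\right)^p,
\]
showing that the constant $(\pi_p/(2N))^p$ cannot be improved.

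There is no real obstacle here: the statement is essentially a corollary of the main theorem plus a one-line convex geometry fact. The only mild care is needed in the sharpness part, where one must correctly compute the limiting perimeter of $C_\alpha$ (in particular, checking that the lateral faces do not degenerate faster than the base, and that the factor $N$ in the inequality $P/|\Omega|\le N/R_\Omega$ is saturated by the pyramid in the limit).
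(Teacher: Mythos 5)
Your proof is correct and follows the same route as the paper: combine Theorem \ref{teo:herschp} with the convex-geometric inequality $P(\Omega)/|\Omega|\le N/R_\Omega$ (which the paper simply cites as \cite[Lemma A.1]{B}, whereas you supply the standard divergence-theorem argument), then verify sharpness on the collapsing pyramids $C_\alpha$ via the same volume and perimeter asymptotics. The only difference is cosmetic: you give the short proof of the geometric lemma instead of a reference, and you compute $P(C_\alpha)\to 2^N$ face-by-face while the paper just notes that the boundary converges to twice the base.
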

\begin{proof}
In order to prove \eqref{hc}, it is sufficient to recall that for an open bounded convex set, we have the sharp estimate (see for example \cite[Lemma A.1]{B})
\begin{equation}
\label{geometric}
\frac{R_\Omega}{N}\le \frac{|\Omega|}{P(\Omega)}.
\end{equation}
By inserting this in \eqref{herschp}, we get the claimed estimate.
\par
We now come to the sharpness issue. Observe that \eqref{hc} has been obtained by joining the two inequalities \eqref{herschp} and \eqref{geometric}. We already know that the family of ``collapsing pyramids'' is asymptotically optimal for the first one, thus we only need to verify that the same family is asymptotically optimal for \eqref{geometric}, as well. 
Let us set as before 
\[
C_\alpha=\mathrm{convex\, hull}\left(\Big(-1,1\Big)^{N-1}\cup\{(0,,\dots,0,\alpha)\}\right).
\]
We recall that
\[
R_{C_\alpha}\sim \frac{\alpha}{2},
\]
while 
\[
|C_\alpha|=2^{N-1}\,\int_0^\alpha \left(1-\frac{z}{\alpha}\right)^{N-1}\,dz=\frac{\alpha\,2^{N-1}}{N},
\]
and
\[
P(C_\alpha)\sim 2\,\left|\Big(-1,1\Big)^{N-1}\right|=2^N.
\]
Thus we get
\[
\frac{|C_\alpha|}{P(C_\alpha)}\sim \frac{\alpha}{2\,N}\sim \frac{R_{C_\alpha}}{N},\qquad \mbox{ for } \alpha\to 0,
\]
as desired.
\end{proof}
We recall the definition of {\it Cheeger constant} of an open bounded set $\Omega\subset\mathbb{R}^N$, i.e.
\[
h_1(\Omega)=\inf_{E\subset \Omega}\left\{\frac{P(E)}{|E|}\, :\, |E|>0\right\}.
\]
Observe that if $P(\Omega)<+\infty$, then $\Omega$ itself is admissible in the previous variational problem. Thus we have the trivial estimate
\[
\frac{P(\Omega)}{|\Omega|}\ge h_1(\Omega).
\]
For convex sets, this estimate can be reverted. Indeed, by recalling that (see \cite[Corollary 6]{KF})
\[
\lim_{p\searrow 1}\lambda_p(\Omega)=h_1(\Omega)\qquad \mbox{ and }\qquad \lim_{p\searrow 1}\pi_p=\pi_1=2,
\]
if we take the limit as $p$ goes to $1$ in \eqref{hc}, we get the following
\begin{corollary}
Let $1<p<+\infty$ and let $\Omega\subset\mathbb{R}^N$ be an open bounded convex set. Then we have
\[
h_1(\Omega)\ge \frac{1}{N}\,\frac{P(\Omega)}{|\Omega|}.
\]
\end{corollary}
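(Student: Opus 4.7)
The plan is to take the $p \to 1^+$ limit in the inequality of Corollary \ref{coro:herschp}, which provides a $p$-dependent lower bound
\[
\lambda_p(\Omega)\ge \left(\frac{\pi_p}{2\,N}\right)^p\,\left(\frac{P(\Omega)}{|\Omega|}\right)^p,
\]
valid for every $1<p<+\infty$ and every open bounded convex set $\Omega\subset\mathbb{R}^N$. The right-hand side is a product of two factors that depend continuously on $p$: the quantity $P(\Omega)/|\Omega|$ is independent of $p$, so $\bigl(P(\Omega)/|\Omega|\bigr)^p \to P(\Omega)/|\Omega|$; and $\pi_p/(2N) \to \pi_1/(2N) = 1/N$ as $p\searrow 1$ by the stated limit $\pi_p\to\pi_1 =2$, hence $(\pi_p/(2N))^p \to 1/N$.

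Next, on the left-hand side I would invoke the known asymptotic behaviour
\[
\lim_{p\searrow 1}\lambda_p(\Omega)=h_1(\Omega),
\]
recorded via \cite[Corollary 6]{KF} in the text immediately preceding the statement. Passing to the limit $p\searrow 1$ on both sides of the inequality above then yields
\[
h_1(\Omega)\ge \frac{1}{N}\,\frac{P(\Omega)}{|\Omega|},
\]
which is exactly the claim.

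There is essentially no obstacle: both of the limiting inputs are already cited in the paper, and the right-hand side is a product of elementary continuous functions of $p$ with a fixed positive base $P(\Omega)/|\Omega|<+\infty$ (since $\Omega$ is a bounded convex set), so no uniform bound or dominated-convergence argument is needed. The only minor point worth noting is that one takes a pointwise limit of an inequality, which preserves ``$\ge$'', so no subtlety intervenes.
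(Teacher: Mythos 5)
Your proposal is correct and is precisely the paper's argument: pass to the limit $p\searrow 1$ in Corollary \ref{coro:herschp}, using the cited facts $\lim_{p\searrow 1}\lambda_p(\Omega)=h_1(\Omega)$ and $\lim_{p\searrow 1}\pi_p=2$. Nothing further is needed.
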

\begin{remark}[The case $p=+\infty$]
The limit as $p$ goes to $+\infty$ of \eqref{hc} is less interesting. Indeed, by taking the $p-$th root on both sides and recalling that (see \cite[Lemma 1.5]{JLM})
\[
\lim_{p\to +\infty} \Big(\lambda_p(\Omega)\Big)^\frac{1}{p}=\frac{1}{R_\Omega},
\]
from \eqref{hc} we get again \eqref{geometric}.
\end{remark}

\section{More general principal frequencies}

\label{sec:6}

By appealing to its variational characterization, the first eigenvalue $\lambda_p(\Omega)$ is nothing but the sharp constant for the Poincar\'e inequality
\[
C_\Omega\,\int_\Omega |u|^p\,dx\le \int_\Omega |\nabla u|^p\,dx,\qquad  \mbox{ for every } u\in C^\infty_0(\Omega).
\]
From a theoretical point of view, it is thus quite natural to consider more generally the ``principal frequencies''
\[
\lambda_{p,q}(\Omega)=\inf_{u\in C^\infty_0(\Omega)\setminus \{0\}} \frac{\displaystyle \int_\Omega |\nabla u|^p\,dx}{\displaystyle \left(\int_\Omega |u|^q\,dx\right)^\frac{p}{q}},\qquad \mbox{ for } q\not =p. 
\]
Of course, such a quantity is interesting only if $q$ is such that
\[
\left\{\begin{array}{ll}
1\le q<p^*,& \mbox{ if } p\le N,\\
1\le q\le +\infty, & \mbox{ if } p>N,
\end{array}
\right.\qquad \mbox{ where } p^*=\frac{N\,p}{N-p}.
\]
For $p<N$ and $q=p^*$, the quantity $\lambda_{p,q}(\Omega)$ does not depend on $\Omega$ and is a universal constant, coinciding with the sharp constant in the Sobolev inequality
\[
C\,\left(\int_{\mathbb{R}^N} |u|^{p^*}\,dx\right)^\frac{p}{p^*}\le \int_{\mathbb{R}^N} |\nabla u|^p\,dx,\qquad  \mbox{ for every } u\in C^\infty_0(\mathbb{R}^N).
\]
In this section, we briefly investigate the possibility to have a lower bound of the type 
\[
\frac{C}{R_\Omega^\beta}\le \lambda_{p,q}(\Omega),
\]
among convex sets, in this case as well. Observe that by scale invariance, the only possibility for the exponent $\beta$ is 
\[
\beta=N-p-N\,\frac{p}{q}.
\]
In the case $q<p$, such an estimate is not possible, as shown in the following
\begin{proposition}[Sub-homogeneous case]
Let $1<p<+\infty$ and $1\le q<p$. Then 
\[
\inf \Big\{R_\Omega^{N\,\frac{p}{q}-N+p}\,\lambda_{p,q}(\Omega)\, :\, \Omega\subset\mathbb{R}^N \mbox{open bounded convex set}\Big\}=0.
\]
\end{proposition}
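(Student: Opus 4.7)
The plan is to exhibit an explicit one-parameter family of open bounded convex sets, with inradius bounded below, along which $\lambda_{p,q}$ tends to zero. The natural choice, suggested by Lemma \ref{lm:product}, is the family of long thin slabs
\[
\Omega_L=(-L,L)^{N-1}\times(0,1),\qquad L>\tfrac{1}{2}.
\]
Each $\Omega_L$ is open, bounded and convex, with inradius $R_{\Omega_L}=1/2$, independent of $L$. The prefactor $R_{\Omega_L}^{Np/q-N+p}=(1/2)^{Np/q-N+p}$ is therefore a fixed positive number, and it is enough to prove
\[
\lim_{L\to+\infty}\lambda_{p,q}(\Omega_L)=0.
\]

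The upper bound on $\lambda_{p,q}(\Omega_L)$ will come from a tensor-product test function. Fix once and for all $\phi\in C^\infty_0((0,1))\setminus\{0\}$ and $\eta\in C^\infty_0((-1,1)^{N-1})\setminus\{0\}$, and set
\[
u_L(x',x_N)=\eta\!\left(\frac{x'}{L}\right)\phi(x_N),\qquad (x',x_N)\in\mathbb{R}^{N-1}\times\mathbb{R}.
\]
Then $u_L\in C^\infty_0(\Omega_L)$. A direct change of variables gives
\[
\int_{\Omega_L}|u_L|^q\,dx=L^{N-1}\,\|\eta\|_{L^q}^q\,\|\phi\|_{L^q}^q,
\]
while the elementary bound $|\nabla u_L|^p\le 2^{(p/2-1)_+}\bigl(L^{-p}|(\nabla\eta)(x'/L)|^p|\phi(x_N)|^p+|\eta(x'/L)|^p|\phi'(x_N)|^p\bigr)$ yields
\[
\int_{\Omega_L}|\nabla u_L|^p\,dx\le C_1\,L^{N-1-p}+C_2\,L^{N-1}\le C_3\,L^{N-1}
\]
for $L\ge 1$, with constants depending only on $N,p,\eta,\phi$.

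Combining these two estimates,
\[
\lambda_{p,q}(\Omega_L)\le \frac{\displaystyle\int_{\Omega_L}|\nabla u_L|^p\,dx}{\displaystyle\left(\int_{\Omega_L}|u_L|^q\,dx\right)^{p/q}}\le C_4\,L^{(N-1)\left(1-\frac{p}{q}\right)}.
\]
Since $q<p$ we have $1-p/q<0$, so (for $N\ge 2$) the right-hand side tends to $0$ as $L\to+\infty$. Combined with the fact that $R_{\Omega_L}^{Np/q-N+p}$ is a fixed constant, this shows the infimum in the statement is $0$, since it is obviously non-negative.

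There is essentially no obstacle here: the entire point is that when $q<p$ the functional in the definition of $\lambda_{p,q}$ is sub-homogeneous, so stretching a thin slab in $N-1$ directions makes the $L^q$ mass of an admissible function grow faster than its Dirichlet energy, while the inradius remains fixed. The only care needed is the correct bookkeeping of powers of $L$ that arise when differentiating the cut-off $\eta(x'/L)$ in the $x'$-variable.
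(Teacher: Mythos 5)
Your proof is correct, and it takes a genuinely different route from the paper's. The paper normalizes $R_\Omega=1$, invokes the characterization (from Brasco--Ruffini) that $\lambda_{p,q}(\Omega)>0$ if and only if $\mathcal{D}^{1,p}_0(\Omega)\hookrightarrow L^q(\Omega)$ is compact, observes that translation invariance kills compactness on the infinite slab $\mathbb{R}^{N-1}\times(-1,1)$ so that $\lambda_{p,q}$ vanishes there, and then passes to the limit along the exhausting boxes $\Omega_L$. You instead work entirely at the level of explicit test functions: the tensor-product $u_L(x',x_N)=\eta(x'/L)\phi(x_N)$ gives the quantitative upper bound $\lambda_{p,q}(\Omega_L)\le C\,L^{(N-1)(1-p/q)}$, which tends to $0$ precisely because $q<p$ makes the exponent negative. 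Your argument is more elementary and self-contained (no appeal to the compactness characterization or to continuity of $\lambda_{p,q}$ under increasing domains), and it yields an explicit decay rate in $L$; the paper's argument is shorter and directly explains \emph{why} the infimum vanishes, namely loss of compactness. Both arguments tacitly require $N\ge 2$ (for $N=1$ the slab degenerates to a fixed interval and the statement is false), a restriction implicit throughout the paper, and you flag it correctly. The bookkeeping of powers of $L$ in your gradient estimate, using $(a+b)^{p/2}\le 2^{(p/2-1)_+}(a^{p/2}+b^{p/2})$, is accurate.
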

\begin{proof}
By scale invariance, we can impose the further restriction that $R_\Omega=1$.
We recall that for $q<p$ we have
\[
\lambda_{p,q}(\Omega)>0\qquad \Longleftrightarrow\qquad \mbox{ the embedding }\, \mathcal{D}^{1,p}_0(\Omega)\hookrightarrow L^q(\Omega)\, \mbox{ is compact},
\]
see \cite[Theorem 1.2]{BR}.
We now observe that for the open convex set $\Omega=\mathbb{R}^{N-1}\times (-1,1)$ the embedding above can not be compact, due to the translation invariance of the set $\Omega$ in the first $N-1$ coordinate directions. Thus we get
\[
\lambda_{p,q}(\mathbb{R}^{N-1}\times(-1,1))=0.
\]
By taking the sequence 
\[
\Omega_L=\left(-\frac{L}{2},\frac{L}{2}\right)^{N-1}\times(-1,1),\qquad L>0,
\]
and using that
\[
\lim_{L\to+\infty}\lambda_{p,q}(\Omega_L)=\lambda_{p,q}(\mathbb{R}^{N-1}\times(-1,1)),
\]
we get the desired conclusion.
\end{proof}
\begin{remark}[Torsional rigidity]
For $q=1$, the quantity 
\[
T_p(\Omega)=\frac{1}{\lambda_{p,1}(\Omega)},
\]
is usually called {\it $p-$torsional rigidity}. The previous results shows that an estimate of the form
\[
T_p(\Omega)\le C\, R_\Omega^{N\,p+p-N},
\]
is not possible.
\end{remark}

On the contrary, for $q>p$ it is possible to have a lower bound on $\lambda_{p,q}$ in terms of the inradius.
\begin{proposition}[Super-homogeneous case]
Let $1<p<\infty$ and $q>p$ such that
\[
\left\{\begin{array}{ll}
q<p^*,& \mbox{ if } p\le N,\\
q\le +\infty, & \mbox{ if } p>N.
\end{array}
\right.
\]
Then there exists a constant $C=C(N,p,q)>0$ such that for every $\Omega\subset\mathbb{R}^N$ open convex set, we have
\[
\lambda_{p,q}(\Omega)\ge \frac{C}{R_\Omega^{N\,\frac{p}{q}-N+p}}.
\]
\end{proposition}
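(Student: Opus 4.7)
The plan is to combine the sharp Poincar\'e-type bound of Theorem~\ref{teo:herschp} with a single Gagliardo-Nirenberg-Sobolev interpolation inequality on $\mathbb{R}^N$. First, we may assume $R_\Omega<+\infty$, since otherwise the claim is vacuous: the stated exponent $\beta:=N\,p/q-N+p$ is strictly positive under the hypotheses on $q$. Theorem~\ref{teo:herschp} then supplies
\[
\|u\|_{L^p(\Omega)}\,\le\,\frac{2\,R_\Omega}{\pi_p}\,\|\nabla u\|_{L^p(\Omega)},\qquad u\in C^\infty_0(\Omega).
\]

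Given $u\in C^\infty_0(\Omega)$, I would extend it by zero to $\mathbb{R}^N$ and apply the Gagliardo-Nirenberg-Sobolev interpolation inequality
\[
\|u\|_{L^q(\mathbb{R}^N)}\,\le\,C(N,p,q)\,\|u\|_{L^p(\mathbb{R}^N)}^{\theta}\,\|\nabla u\|_{L^p(\mathbb{R}^N)}^{1-\theta},\qquad \theta:=1+\frac{N}{q}-\frac{N}{p},
\]
whose exponent $\theta\in(0,1)$ is fixed by scale invariance. This inequality is a classical consequence of Sobolev embeddings, and it is available in each of the three regimes of the statement: for $p<N$ one derives it by interpolating $L^q$ between $L^p$ and the Sobolev endpoint $L^{p^*}(\mathbb{R}^N)$; for $p=N$ it is a Gagliardo-Nirenberg inequality valid for any $N<q<\infty$; for $p>N$ one interpolates $L^q$ between $L^p$ and $L^\infty$, using at the upper endpoint the Morrey-type bound $\|u\|_{L^\infty(\mathbb{R}^N)}\le C\,\|u\|_{L^p(\mathbb{R}^N)}^{1-N/p}\,\|\nabla u\|_{L^p(\mathbb{R}^N)}^{N/p}$.

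Raising the previous display to the $p$-th power and feeding the Poincar\'e bound into the factor $\|u\|_{L^p}^{p\theta}$ gives
\[
\|u\|_{L^q(\Omega)}^p\,\le\,C(N,p,q)\,R_\Omega^{p\theta}\,\|\nabla u\|_{L^p(\Omega)}^p,
\]
and a short bookkeeping shows that $p\,\theta=p+Np/q-N=\beta$, so that taking the infimum over admissible $u$ yields the claim. If $\Omega$ is unbounded one first applies the estimate to $\Omega\cap B_R(0)$ and passes to the limit $R\to+\infty$, as in Part~1 of the proof of Theorem~\ref{teo:herschp}.

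The main obstacle is packaging the Gagliardo-Nirenberg-Sobolev step so that it covers the three regimes $p<N$, $p=N$, $p>N$ uniformly, and in particular the upper endpoint $q=\infty$ when $p>N$. The Morrey bound used there admits an elementary proof tailored to $C^\infty_0(\Omega)$ functions on a convex set: for any $x$ in the support one walks to a boundary point of $\Omega$ at distance $\le R_\Omega$ where $u$ vanishes and applies Morrey's H\"older estimate to the zero extension of $u$, yielding $\|u\|_{L^\infty(\Omega)}\le C\,R_\Omega^{1-N/p}\,\|\nabla u\|_{L^p(\Omega)}$, which in combination with Poincar\'e is equivalent to the scale-invariant form used above. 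Apart from assembling this ingredient, the argument is purely algebraic.
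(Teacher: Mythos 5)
Your argument is correct and follows essentially the same route as the paper: both proofs combine the Gagliardo--Nirenberg interpolation inequality with exponent $\theta=1+N/q-N/p$ and the Hersch--Protter bound of Theorem~\ref{teo:herschp}, and the bookkeeping $p\,\theta=Np/q-N+p$ is identical. The only cosmetic difference is that you work with the Poincar\'e inequality directly rather than with $\varepsilon$-almost-minimizers, and you spell out the endpoint cases $p=N$ and $(p>N,\,q=\infty)$ that the paper leaves to the phrase ``classical Gagliardo--Nirenberg inequalities.''
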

\begin{proof}
By using the classical Gagliardo-Nirenberg inequalities, we have for every $u\in C^\infty_0(\Omega)$
\begin{equation}
\label{GNS}
\left(\int_\Omega |u|^q\,dx\right)^\frac{p}{q}\le C\, \left(\int_\Omega |u|^p\,dx\right)^\vartheta\,\left(\int_\Omega |\nabla u|^p\,dx\right)^{1-\vartheta},
\end{equation}
where $C=C(N,p,q)>0$ and 
\[
\vartheta=\frac{N}{q}-\frac{N}{p}+1.
\]
For every $\varepsilon>0$, we take $\varphi\in C^\infty_0(\Omega)$ such that
\[
\lambda_{p,q}(\Omega)+\varepsilon>\frac{\displaystyle\int_\Omega |\nabla \varphi|^p\,dx}{\left(\displaystyle\int_\Omega |\varphi|^q\,dx\right)^\frac{p}{q}}.
\]
By using \eqref{GNS} to estimate the denominator, we end up with
\[
\lambda_{p,q}(\Omega)+\varepsilon>\left(\frac{\displaystyle\int_\Omega |\nabla \varphi|^p\,dx}{\displaystyle\int_\Omega |\varphi|^p\,dx}\right)^\vartheta\ge \Big(\lambda_p(\Omega)\Big)^\vartheta.
\]
If we now use Theorem \ref{teo:herschp} and recall the definition of $\vartheta$, we get the desired conclusion.
\end{proof}
The previous proof very likely does not produce the sharp constant. On the other hand, the Hersch's argument used for the case $p=q$ does not seem to work in this case. Thus, we leave an open problem, which is quite interesting in our opinion.
\begin{open}
Find the sharp constant $C=C(N,p,q)>0$ such that for $p<q$
\[
\lambda_{p,q}(\Omega)\ge \frac{C}{R_\Omega^{N\,\frac{p}{q}-N+p}},\qquad \mbox{ for every } \Omega\subset\mathbb{R}^N \mbox{ open bounded convex set}.
\]
\end{open}

\appendix

\section{$\pi_1$ and $\pi_\infty$}

We observed in Section \ref{sec:2} that
\[
\pi_1=\pi_\infty=2.
\]
For the reader's convenience, we present a proof of these facts.
\begin{lemma}
\label{lm:pi1}
We have
\[
\pi_1=\inf_{\varphi\in C^1([0,1])\setminus\{0\}} \left\{\frac{\displaystyle\int_0^1 |\varphi'|\,dt}{\displaystyle\int_0^1 |\varphi|\,dt}\, :\, \varphi(0)=\varphi(1)=0\right\}=2.
\]
\end{lemma}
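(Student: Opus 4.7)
The plan is to establish the two inequalities $\pi_1 \geq 2$ and $\pi_1 \leq 2$ separately.

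For the lower bound $\pi_1 \geq 2$, the key is an elementary pointwise estimate coming from the Fundamental Theorem of Calculus. For any admissible $\varphi$ and every $t \in [0,1]$,
\[
|\varphi(t)| = \Bigl|\int_0^t \varphi'(s)\,ds\Bigr| \leq \int_0^t |\varphi'(s)|\,ds,
\]
and symmetrically $|\varphi(t)| \leq \int_t^1 |\varphi'(s)|\,ds$, using $\varphi(1)=0$. Summing these two bounds gives
\[
2\,|\varphi(t)| \leq \int_0^1 |\varphi'(s)|\,ds \qquad \text{for every } t \in [0,1],
\]
so $\|\varphi\|_{L^\infty([0,1])} \leq \tfrac{1}{2}\|\varphi'\|_{L^1([0,1])}$. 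The trivial bound $\|\varphi\|_{L^1([0,1])} \leq \|\varphi\|_{L^\infty([0,1])}$ (valid since the interval has unit length) then yields $\int_0^1 |\varphi'|\,dt \geq 2\int_0^1 |\varphi|\,dt$, whence $\pi_1 \geq 2$ after passing to the infimum.

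For the upper bound $\pi_1 \leq 2$, I would exhibit a minimizing sequence of trapezoid-shaped test functions. Using a standard smooth cutoff construction, for each $\varepsilon \in (0, 1/2)$ one produces $\varphi_\varepsilon \in C^\infty([0,1])$ with $\varphi_\varepsilon(0) = \varphi_\varepsilon(1) = 0$, $0 \leq \varphi_\varepsilon \leq 1$, $\varphi_\varepsilon \equiv 1$ on $[\varepsilon, 1-\varepsilon]$, and monotone on $[0,\varepsilon]$ and on $[1-\varepsilon, 1]$. The monotonicity gives
\[
\int_0^1 |\varphi_\varepsilon'|\,dt = \int_0^\varepsilon \varphi_\varepsilon'\,dt - \int_{1-\varepsilon}^1 \varphi_\varepsilon'\,dt = 1 + 1 = 2,
\]
while $\int_0^1 \varphi_\varepsilon\,dt \geq 1 - 2\varepsilon$. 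Hence the ratio is at most $2/(1-2\varepsilon)$, which tends to $2$ as $\varepsilon \to 0^+$.

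There is no real obstacle here; both halves rely only on elementary one-dimensional calculus. The only conceptual point worth flagging is that the extremizing profile is a \emph{box} (the trapezoid flattening as $\varepsilon \to 0$) rather than a tent: indeed, the tent function $t \mapsto \min\{t, 1-t\}$ would give ratio $4$ rather than $2$, since the $L^\infty$-to-$L^1$ comparison becomes sharp only when $\varphi$ is nearly constant on most of $[0,1]$. The sharpness of $\pi_1 = 2$ is therefore only asymptotic, consistent with the infimum not being attained in $C^1([0,1])$.
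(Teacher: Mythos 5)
Your proof is correct and follows essentially the same strategy as the paper: the Fundamental Theorem of Calculus applied from both endpoints for the lower bound, and a trapezoidal almost-minimizer for the upper bound. The one small difference is in the lower bound: you first establish the pointwise estimate $2|\varphi(t)| \le \int_0^1 |\varphi'|\,ds$ and then pass through $\|\varphi\|_{L^1} \le \|\varphi\|_{L^\infty}$, whereas the paper integrates the one-sided FTC bound over each half-interval $[0,1/2]$ and $[1/2,1]$ and interchanges the order of integration to get $\int_0^1|\varphi|\,dt \le \int_0^1 \bigl|\tfrac12 - \tau\bigr|\,|\varphi'(\tau)|\,d\tau \le \tfrac12\int_0^1|\varphi'|\,d\tau$; your route is marginally cleaner in that it avoids Fubini, while the paper's intermediate weighted inequality is marginally sharper, but both yield the same constant.
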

\begin{proof}
We take an admissible test function $\varphi$, for every $t\in[0,1/2]$ we have
\[
|\varphi(t)|=|\varphi(t)-\varphi(0)|=\left|\int_0^t \varphi'(\tau)\,d\tau\right|\le \int_0^t |\varphi'(\tau)|\,d\tau.
\]
By integrating over $[0,1/2]$ and exchanging the order of integration, we obtain
\begin{equation}
\label{1}
\begin{split}
\int_0^\frac{1}{2} |\varphi(t)|\,dt\le \int_0^\frac{1}{2}\left(\int_0^t |\varphi'(\tau)|\,d\tau\right)\,dt&=\int_0^\frac{1}{2}|\varphi'(\tau)|\,\left(\int_\tau^\frac{1}{2}dt\right)\,d\tau\\
&=\int_0^\frac{1}{2} \left(\frac{1}{2}-\tau\right)\,|\varphi'(\tau)|\,d\tau.
\end{split}
\end{equation}
Similarly, for every $t\in[1/2,1]$ we have
\[
|\varphi(t)|=|\varphi(1)-\varphi(t)|=\left|\int_t^1 \varphi'(\tau)\,d\tau\right|\le \int_t^1 |\varphi'(\tau)|\,d\tau.
\]
By integrating over $[1/2,1]$ and exchanging again the order of integration,  we obtain
\begin{equation}
\label{2}
\begin{split}
\int_\frac{1}{2}^1 |\varphi(t)|\,dt\le \int_\frac{1}{2}^1\left(\int_t^1 |\varphi'(\tau)|\,d\tau\right)\,dt&=\int_\frac{1}{2}^1|\varphi'(\tau)|\,\left(\int_\frac{1}{2}^\tau dt\right)\,d\tau\\
&=\int_\frac{1}{2}^1 \left(\tau-\frac{1}{2}\right)\,|\varphi'(\tau)|\,d\tau.
\end{split}
\end{equation}
If we now sum \eqref{1} and \eqref{2}, we get
\[
\int_0^1 |\varphi(t)|\,dt\le \int_0^1 \left|\frac{1}{2}-\tau\right| |\varphi'(\tau)|\,d\tau\le \frac{1}{2}\,\int_0^1 |\varphi'(\tau)|\,d\tau.
\]
This proves that $\pi_1\ge 2$.
\par
In order to prove the reverse estimate, we fix $0<\delta<1/2$ and take the piecewise affine function
\[
\varphi_\delta(t)=\left\{\begin{array}{rl}
0,& \mbox{ if } 0\le t<\delta,\\
\displaystyle\frac{t-\delta}{\delta}, & \mbox{ if } \delta<t<2\,\delta,\\
1,& \mbox{ if } 2\,\delta\le t\le 1-2\,\delta,\\
\displaystyle\frac{1-\delta-t}{\delta},& \mbox{ if } 1-2\,\delta<t<1-\delta,\\
0,& \mbox{ if } 1-\delta\le t\le 1.
\end{array}
\right.
\]
\begin{figure}
\includegraphics[scale=.3]{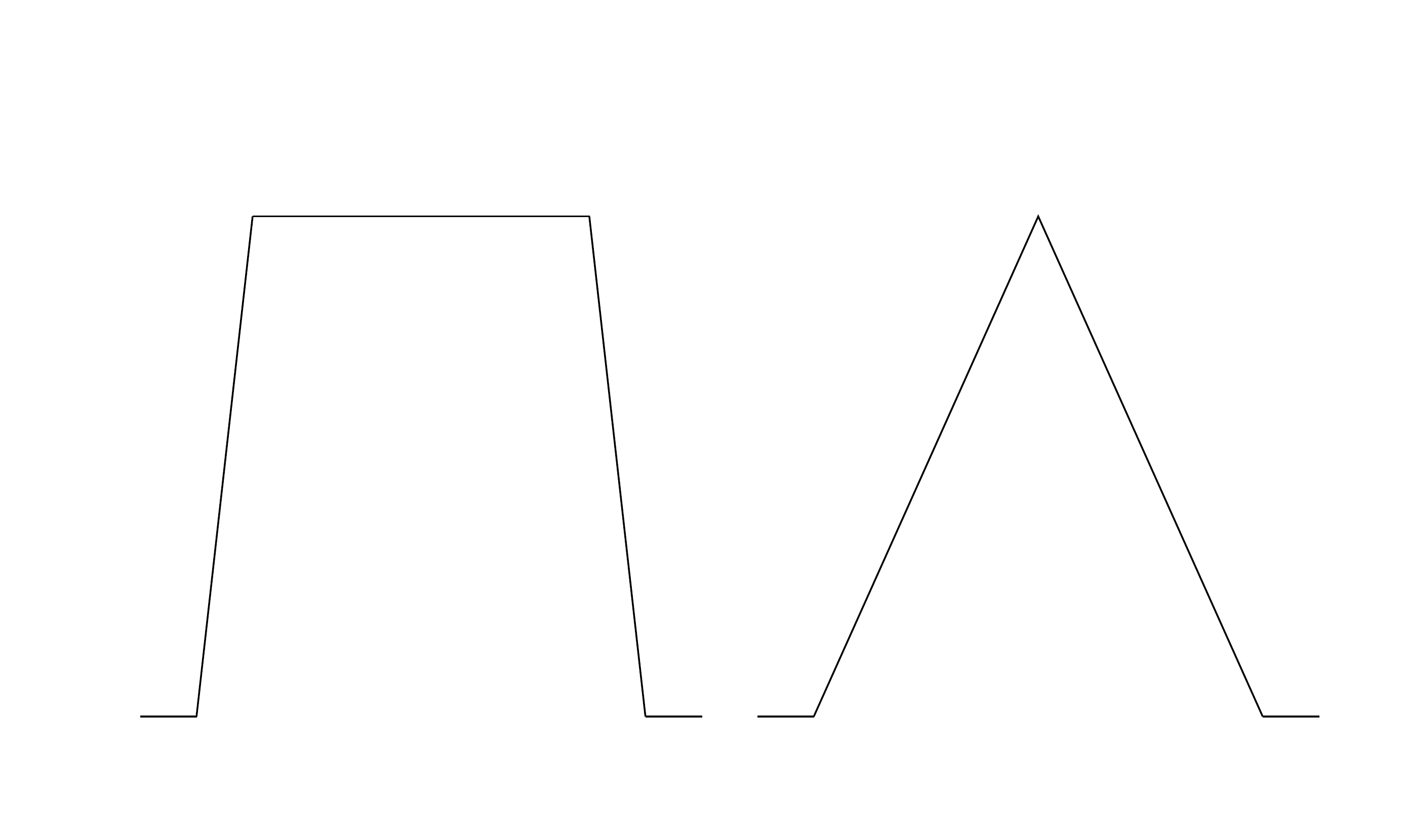}
\caption{The function $\varphi_\delta$ for $p=1$ (left) and $p=\infty$ (right).}
\end{figure}
We take $\{\varrho_\varepsilon\}_{\varepsilon>0}$ a family of standard mollifiers, then for $0<\varepsilon\ll 1$ the function $\varphi_\delta\ast\varrho_\varepsilon$ is admissible. Thus, we get
\[
\pi_1\le \lim_{\varepsilon\to 0^+}\frac{\displaystyle\int_0^1 |(\varphi_\delta\ast \varrho_\varepsilon)'|\,dt}{\displaystyle\int_0^1 |\varphi_\delta\ast \varrho_\varepsilon|\,dt}=\lim_{\varepsilon\to 0^+}\frac{\displaystyle\int_0^1 |(\varphi_\delta')\ast \varrho_\varepsilon|\,dt}{\displaystyle\int_0^1 |\varphi_\delta\ast \varrho_\varepsilon|\,dt}=\frac{\displaystyle\int_0^1 |\varphi_\delta'|\,dt}{\displaystyle\int_0^1 |\varphi_\delta|\,dt}=\frac{2}{1-3\,\delta}.
\]
By taking the limit as $\delta$ goes to $0$, we get the desired conclusion.
\end{proof}

\begin{lemma}
\label{lm:pinfty}
We have
\[
\pi_\infty=\inf_{\varphi\in C^1([0,1])\setminus\{0\}} \left\{\frac{\|\varphi'\|_{L^\infty([0,1])}}{\displaystyle\|\varphi\|_{L^\infty([0,1])}}\, :\, \varphi(0)=\varphi(1)=0\right\}=2.
\]
\end{lemma}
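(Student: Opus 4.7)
The plan is to mirror Lemma \ref{lm:pi1}: establish $\pi_\infty \ge 2$ by a direct pointwise estimate at a point where $|\varphi|$ is maximal, and then prove $\pi_\infty \le 2$ by exhibiting a tent-shaped near-optimal family, mollified to belong to $C^1$.

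For the lower bound I fix an admissible $\varphi\in C^1([0,1])$ with $\varphi(0)=\varphi(1)=0$. By continuity there is $t^\ast\in[0,1]$ with $|\varphi(t^\ast)|=\|\varphi\|_{L^\infty([0,1])}$. The boundary conditions and the fundamental theorem of calculus give
\[
|\varphi(t^\ast)|=\left|\int_0^{t^\ast}\varphi'(\tau)\,d\tau\right|\le t^\ast\,\|\varphi'\|_{L^\infty([0,1])},
\]
and analogously, writing $\varphi(t^\ast)=-\int_{t^\ast}^1\varphi'\,d\tau$,
\[
|\varphi(t^\ast)|\le (1-t^\ast)\,\|\varphi'\|_{L^\infty([0,1])}.
\]
Since $\min\{t^\ast,1-t^\ast\}\le 1/2$, at least one of these two bounds yields $\|\varphi\|_{L^\infty}\le (1/2)\,\|\varphi'\|_{L^\infty}$, proving $\pi_\infty\ge 2$.

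For the upper bound I would use the truncated tent function
\[
\varphi_\delta(t)=\begin{cases} 0, & 0\le t\le \delta,\\ t-\delta, & \delta\le t\le 1/2,\\ 1-\delta-t, & 1/2\le t\le 1-\delta,\\ 0, & 1-\delta\le t\le 1,\end{cases}
\]
depicted on the right panel of the figure. A direct computation gives $\|\varphi_\delta\|_{L^\infty}=1/2-\delta$ and $|\varphi_\delta'(t)|=1$ for a.e.\ $t$ in the support of $\varphi_\delta'$, hence $\|\varphi_\delta'\|_{L^\infty}=1$ and the ratio equals $2/(1-2\delta)$. Since $\varphi_\delta$ is only Lipschitz, I convolve with a family of standard mollifiers $\{\varrho_\varepsilon\}_{\varepsilon>0}$ supported in $(-\varepsilon,\varepsilon)$, choosing $\varepsilon<\delta$; then $\varphi_\delta\ast \varrho_\varepsilon\in C^\infty$ vanishes on a neighbourhood of $\{0,1\}$ and is therefore admissible. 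Using $(\varphi_\delta\ast \varrho_\varepsilon)'=\varphi_\delta'\ast \varrho_\varepsilon$ together with the contractivity $\|f\ast \varrho_\varepsilon\|_{L^\infty}\le \|f\|_{L^\infty}$, and the fact that $\|\varphi_\delta\ast \varrho_\varepsilon\|_{L^\infty}\to \|\varphi_\delta\|_{L^\infty}$ as $\varepsilon\to 0^+$, in the limit I obtain
\[
\pi_\infty\le \frac{2}{1-2\,\delta}.
\]
Letting $\delta\to 0^+$ concludes the argument.

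The proof is essentially routine; the only delicate point is ensuring that the mollified test function still vanishes at the endpoints, which is why I keep a buffer of width $\delta$ at each end before mollifying. This obstacle is handled exactly as in Lemma \ref{lm:pi1}.
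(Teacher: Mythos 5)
Your proof is correct and follows essentially the same route as the paper: bound from below via the fundamental theorem of calculus applied at a maximum point of $|\varphi|$, and from above via a mollified tent function supported away from the endpoints. The one genuine (if minor) variation is in the lower bound: after obtaining $|\varphi(t^\ast)|\le t^\ast\,\|\varphi'\|_\infty$ and $|\varphi(t^\ast)|\le (1-t^\ast)\,\|\varphi'\|_\infty$, you conclude directly from $\min\{t^\ast,1-t^\ast\}\le 1/2$, whereas the paper multiplies the two estimates and observes $1/\sqrt{t_0(1-t_0)}\ge 2$. Your version is marginally more direct; both are elementary and equally valid. For the upper bound, your tent has unit slope and height $1/2-\delta$ while the paper's has unit height and slope $1/(1/2-\delta)$, but these are the same function up to scaling and yield the identical ratio $2/(1-2\delta)$, so there is no substantive difference.
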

\begin{proof}
We take an admissible test function $\varphi$, then we take $t_0\in(0,1)$ one of the maximum points of $|\varphi|$. We obtain
\[
\|\varphi\|_{L^\infty([0,1])}=|\varphi(t_0)|=|\varphi(t_0)-\varphi(0)|\le \int_0^{t_0} |\varphi'(\tau)|\,d\tau\le t_0\,\|\varphi'\|_{L^\infty([0,1])},
\]
and
\[
\|\varphi\|_{L^\infty([0,1])}=|\varphi(t_0)|=|\varphi(1)-\varphi(t_0)|\le \int_{t_0}^1 |\varphi'(\tau)|\,d\tau\le (1-t_0)\,\|\varphi'\|_{L^\infty([0,1])}.
\]
By taking the product of the last two estimates, we get
\[
\sqrt{\frac{1}{t_0\,(1-t_0)}}\le \frac{\|\varphi'\|_{L^\infty([0,1])}}{\displaystyle\|\varphi\|_{L^\infty([0,1])}}.
\]
By observing that 
\[
\sqrt{\frac{1}{t_0\,(1-t_0)}}\ge 2,\qquad \mbox{ for every } t_0\in(0,1),
\]
we get that $\pi_\infty\ge 2$. 
\par
In order to get the reverse inequality, we fix $0<\delta<1/2$ and take the function
\[
\varphi_\delta=\left\{\begin{array}{rl}
0,& \mbox{ if } 0\le t\le\delta,\\
&\\
\displaystyle1-\frac{\left|t-\dfrac{1}{2}\right|}{\dfrac{1}{2}-\delta},& \mbox{ if } \delta<t<1-\delta,\\
&\\
0,&\mbox{ if } 1-\delta\le t\le 1.
\end{array}
\right.
\] 
By taking as above the convolution with the standard mollifiers $\{\varrho_\varepsilon\}_{\varepsilon>0}$, we get\footnote{In the second inequality, we use that
\[
\int_0^1 \varrho_\varepsilon\,dt=1,\qquad \mbox{ for every }\varepsilon\ll 1.
\]}
\[
\pi_\infty\le \lim_{\varepsilon\to 0^+}\frac{\|\varphi'_\delta\ast \varrho_\varepsilon\|_{L^\infty([0,1])}}{\displaystyle\|\varphi_\delta\ast\varrho_\varepsilon\|_{L^\infty([0,1])}}\le \lim_{\varepsilon\to 0^+} \frac{\|\varphi'_\delta\|_{L^\infty([0,1])}}{\displaystyle\|\varphi_\delta\ast\varrho_\varepsilon\|_{L^\infty([0,1])}}=\frac{\|\varphi'_\delta\|_{L^\infty([0,1])}}{\displaystyle\|\varphi_\delta\|_{L^\infty([0,1])}}=\frac{1}{\dfrac{1}{2}-\delta}.
\]
We can now take the limit as $\delta$ goes to $0$ and obtain that $\pi_\infty\le 2$, as well.
\end{proof}

\end{document}